\newcommand{ \bl}{\color{blue}}
\newtheorem{theorem}{Theorem}[section]
\newtheorem{lemma}[theorem]{Lemma}
\newtheorem{corollary}[theorem]{Corollary}
\theoremstyle{definition}
\newtheorem{definition}[theorem]{Definition}
\newtheorem{example}{Example}
\newtheorem{remark}{Remark}
\numberwithin{equation}{section}
\begin{document}
\title[GMEP And A Countable Family of Nonexpansive-Type Maps] 
{ A hybrid scheme for fixed points of a countable family of generalized non-expansive-type maps and generalized mixed equilibrium problems}

\keywords{ {\bl O}ptimization, equilibrium problem, $J_*-$ nonexpansive, fixed points, variational inequality, strong convergence.\\ 2010 Mathematics Subject Classification: 
47H09,   47H05, 47J25, 47J05.\\
**Corresponding author: Markjoe O. Uba (markjoeuba@gmail.com)}

\maketitle

\begin{center}
\vspace{0.3cm}
Peter U. Nwokoro$^{++}$, Maria A. Onyido$^{**}$, Markjoe O. Uba$^{**}$, and Cyril I. Udeani$^{\&\&}$\\

\vspace{0.3cm}
$^{**}$Department of Mathematical Science, \\
Northern Illinois University,\\
DeKalb, IL 60115, USA.\\

\vspace{0.3cm}
$^{\&\&}$Department of Computer Science, \\
University of Nevada, Las Vegas\\
Las Vegas, NV 89154, USA.\\

\vspace{0.3cm}
$^{++}$Department of Mathematics, \\
University of Nigeria,\\
Nsukka, Nigeria.\\

\vspace{0.4cm} 
\end{center}

\noindent {\bf Abstract.} 
Let $\Omega$ be a nonempty closed and convex subset of a uniformly smooth and uniformly convex real Banach space $\mathcal{X}$ 
with dual space $\mathcal{X}^*$. This article presents a hybrid algorithm for finding a common element of the set of
solutions to a generalized mixed equilibrium problem and the set of common fixed points of a family of a general class of nonlinear nonexpansive maps. The results obtained were employed to study optimization problem. Our results and its applications complement, generalize, and extend several results in literature.

\section{introduction}
\noindent Let $\mathcal{X}$ be a uniformly convex and uniformly smooth real Banach space with dual space $\mathcal{X}^*$. Let $\Omega$ be a nonempty closed and convex subset of $\mathcal{X}$ such that $J\Omega$ is closed and convex, where $J : \mathcal{X} \rightarrow \mathcal{X}^*$ is the normalized duality map on $\mathcal{X}$. Let $\langle \cdot, \cdot \rangle$ denote the duality pairing between $\mathcal{X}$ and $\mathcal{X}^*$. Let $f : J\Omega\times J\Omega \rightarrow \mathbb{R}$ be a bifunction, $\varphi : J\Omega \rightarrow \mathbb{R}$ be a real-valued function, and $A : \Omega \rightarrow \mathcal{X}^*$ be
a nonlinear mapping. The {\it generalized
mixed equilibrium problem} is to find an element $v \in \Omega$ such that
\begin{equation}
    f(Jv,Ju) + \varphi(Ju) - \varphi(Jv) + \langle Av,u-v \rangle \ge 0 ~\forall u \in \Omega.
\end{equation}
In this paper, we consider equilibrium problem such that $f$ is given by 
\begin{equation}
    f(Jx,Jy) = \sum_{i=1}^k f_i(Jx,Jy), ~\forall x,y \in \Omega,
\end{equation}
where $f_i : J\Omega\times J\Omega \rightarrow \mathbb{R}$ {\bl is a bifunction} for each $i\in \{1,2,3,\cdots,k\}, \ k\geq1$. Consequently, we study the following well known generalized
mixed equilibrium problem: find $v \in \Omega$ such that 
\begin{equation}\label{gmep}
    \sum_{i=1}^k f_i(Jv,Ju) + \varphi(Ju) - \varphi(Jv) + \langle Av,u-v \rangle \ge 0 ~\forall u \in \Omega.
\end{equation}
The set of solution of \eqref{gmep} is given by
\begin{equation*}\label{gmepsol}
   GMEP(f,A,\varphi) = \Bigg\{v\in \Omega : \sum_{i=1}^k f_i(Jv,Ju) + \varphi(Ju) - \varphi(Jv) + \langle Av,u-v \rangle \ge 0 ~\forall u \in \Omega\Bigg\}.
\end{equation*}
\noindent If $A=0$, \eqref{gmep} reduces to finding $v \in \Omega$ such that 
\begin{equation}\label{mep}
    \sum_{i=1}^k f_i(Jv,Ju) + \varphi(Ju) - \varphi(Jv) \ge 0 ~\forall u \in \Omega,
\end{equation}
which is called the mixed equilibrium problem and MEP denotes the set of solutions to \eqref{mep}. The class of generalized mixed equilibrium problems is famous, well studied, and contains, as special cases, numerous important classes of nonlinear problems such
as equilibrium problems, optimization problems, variational inequality problems (see e.g., \cite{bcsk, dooai, gsao, lwz} and the references contained
in them).\\

\noindent For an arbitrary real normed space $\mathcal{X}$ with dual space $\mathcal{X}^*,$ an operator $A:\mathcal{X}\rightrightarrows \mathcal{X}^*$
is called {\it monotone} if
\begin{equation}
 \langle \xi-\tau,x-y \rangle \geq0\,\, \forall \,\, \xi \in Ax, \tau \in Ay.
\end{equation}

\noindent It is well known that these operators appear in a wide variety of contexts since they can be found in many functional equations. Many of them also are known to appear in calculus of variations as subdifferential of convex functions. Additionally, monotone operators are well known to have a strong connection with optimization. There are several ongoing research efforts to develop and employ fixed point techniques to approximate solution of
the equation $Au = 0$ when $A$ is monotone. One of such efforts lead to the introduction and study of a new notion of fixed points for maps from $\mathcal{X}$ to $\mathcal{X}^*$ called $J-$fixed points (see e.g., \cite{buoou, ceez, cidu, cuuoi} and the references contained in them).\\

\noindent It is our purpose in this paper to introduce and study a new hybrid algorithm and prove a strong convergence theorems for obtaining a common element in the solutions of a generalized mixed
equilibrium problem and common fixed points for a countable family of generalized $J_*-$nonexpansive maps (as well as generalized $J-$nonexpansive maps) in a uniformly smooth and uniformly convex real Banach space. The results introduced are applicable in classical Banach spaces such as $L_{p},~l_{p}, or~ W^{m}_{p}(\Omega)$, $p \in (1,\infty)$, where $W^{m}_{p}(\Omega)$ denotes the usual Sobolev space. Additionally, the Hilbert space case of the results obtained complement, extend and improve
several results in literature.

\bigskip
\section{ Preliminaries}
\noindent In this section, we present definitions and lemmas which we shall use in proving our main result. 
\begin{definition}{\bf (Uniformly Smooth)}
A normed linear space $\mathcal{X}$  of dimension $\geq 2$ is called {\it uniformly smooth} if the modulus of smoothness $\rho_{\mathcal{X}}:[0,\infty)\rightarrow [0,\infty)$, defined by
\[ \rho_{\mathcal{X}(\tau )}:= \sup\left\{\frac{\| x+y\| +\| x-y\|}{2}-1: \| x\| =1, \| y\| =\tau,~~\tau >0\right\}  \]
tends to $0$ as $\tau \to 0.$  
\end{definition}
\begin{definition}{\bf (Uniformly Convex)}
 A Banach space $\mathcal{X}$  is \textit{uniformly convex} if the \textit{modulus of convexity} of $\mathcal{X}, \; \delta_{\mathcal{X}}:(0,2]\rightarrow [0,1]$ defined by
 $$\delta_{\mathcal{X}(\epsilon)}:=\inf\Big\{1-\Big\|\frac{x+y}{2}\Big\|:\|x\|=\|y\|=1;\, \epsilon=\|x-y\|\Big\}$$
 is positive for every $\epsilon\in (0,2].$
\end{definition}
\begin{definition}\label{ndm}{\bf (Normalized duality map)}
 The map $J:\mathcal{X}\rightarrow 2^{\mathcal{X}^*}$ defined by
$$Jx:=\big \{x^*\in \mathcal{X}^*:\big <x,x^*\big >=\|x\|.\|x^*\|,~\|x\|=\|x^*\|\big \}$$ is called the {\it normalized duality map} on $\mathcal{X}$.
\end{definition}
 
\noindent It is well known that if $\mathcal{X}$ is smooth, strictly convex and reflexive then $J^{-1}$ exists 
(see e.g., \cite{tak}); 
$J^{-1}:\mathcal{X}^{*}\rightarrow \mathcal{X}$ is the normalized duality mapping on $\mathcal{X}^{*}$,
and $J^{-1}=J_{*}, ~JJ_{*}=I_{\mathcal{X}^{*}}$ and $J_{*}J =I_{\mathcal{X}}$, where $I_{\mathcal{X}}$ and $I_{\mathcal{X}^{*}}$ are the identity maps on
 $\mathcal{X}$ and $\mathcal{X}^{*}$, respectively.  A well known property of $J$ is  (see e.g., \cite{Io, tak}):
\begin{itemize}
\item If $\mathcal{X}$ is uniformly smooth, then $J$ is uniformly continuous on bounded subsets of $\mathcal{X}$.
\end{itemize}

\begin{definition}{\bf (Lyapunov Functional)}
Let $\mathcal{X}$ be a smooth real Banach space with dual $\mathcal{X}^*$. The {\it Lyapounov functional} $\phi:\mathcal{X}\times \mathcal{X}\to\mathbb{R}$, is defined by
\begin{eqnarray}\label{Lya}
 \phi(x,y)=\|x\|^2-2\langle x,Jy\rangle+\|y\|^2,~~\text{for}~x,y\in \mathcal{X},
\end{eqnarray}
where $J$ is the normalized duality map.
\end{definition}

\noindent The {\it Lyapounov functional} was introduced and studied in \cite{b1, AG, b2, 40}. If $\mathcal{X}=H$, a real Hilbert space, then
equation (\ref{Lya}) reduces to $\phi(x,y)=\|x-y\|^2$ for $x,y\in H.$ It is obvious from the definition of the function $\phi$ that
\begin{eqnarray}\label{fi}
 (\|x\|-\|y\|)^2\leq \phi(x,y)\leq(\|x\|+\|y\|)^2~~\text{for}~x,y\in \mathcal{X}.
\end{eqnarray}

\begin{definition}\label{gen nonexp}{\bf (Generalized nonexpansive)}
Let $\Omega$ be a nonempty closed and convex subset of a real Banach space $\mathcal{X}$ and $T$ be a map from $\Omega$ to $\mathcal{X}$. The map $T$ is called 
{\it generalized nonexpansive} if $F(T):=\{x\in \Omega: Tx=x\}\neq \emptyset$ and $\phi(Tx,p)\leq \phi(x,p)$ for all $x\in \Omega, p\in F(T)$. 
\end{definition}
\begin{definition}{\bf (Retraction)}
A map $R$ from $\mathcal{X}$ onto $\Omega$ is said to be a retraction if $R^{2}=R$. The map $R$ is said to be {\it sunny}  if $R(Rx+t(x-Rx))=Rx$ 
for all $x\in \mathcal{X}$ and {\bl $t\geq 0$}. 
\end{definition}

\noindent A nonempty closed subset $\Omega$ of a smooth Banach space $\mathcal{X}$ is said to be a {\it sunny generalized nonexpansive retract} of $\mathcal{X}$ if there exists 
a sunny generalized nonexpansive retraction $R$ from $\mathcal{X}$ onto $\Omega$.\\

\noindent {\bf NST-condition.}
Let $\Omega$ be a closed subset of a Banach space $\mathcal{X}$. Let $\{T_{n}\}$ and $\Gamma$ be two families of generalized nonexpansive maps of $\Omega$ into 
$\mathcal{X}$ such that $\cap_{n=1}^{\infty}F(T_{n})=F(\Gamma)\neq \emptyset,$ where $F(T_{n})$ is the set of fixed points of $\{T_{n}\}$ and $F(\Gamma)$
is the set of common fixed points of $\Gamma$. 

\begin{definition}
The sequence  $\{T_{n}\}$ satisfies the NST-condition (see e.g., \cite{nakajostak}) with $\Gamma$ if for each bounded sequence $\{x_{n}\}\subset \Omega$,
$$\lim_{n\rightarrow \infty}||x_{n}-T_{n}x_{n}||=0 \Rightarrow  \lim_{n\rightarrow \infty}||x_{n}-Tx_{n}||=0, ~for ~all~T\in \Gamma.$$
\end{definition}

\begin{remark}\label{rmk1}
  If $\Gamma =\{T\}$ a singleton, $\{T_{n}\}$ satisfies the NST-condition with $\{T\}$. If $T_{n}=T$ for all $n\geq 1$, 
then, $\{T_{n}\}$ satisfies the NST-condition with $\{T\}$.
\end{remark}

\noindent Let $\Omega$ be a nonempty closed and convex  subset of a uniformly smooth and uniformly convex real Banach space $\mathcal{X}$ with dual space $\mathcal{X}^*$.
Let $J$ be the normalized duality map on $\mathcal{X}$ and $J_{*}$ be the normalized duality map on $\mathcal{X}^*$. Observe that under this setting, $J^{-1}$ exists 
and $J^{-1}=J_{*}$. With these notations, we have the following definitions.

\begin{definition} {\bf (Closed map)}\cite{uoo}
 A map $T:\Omega\rightarrow \mathcal{X}^*$ is called {\it $J_{*}-$closed} if $(J_{*}oT) : \Omega\rightarrow \mathcal{X}$ is a closed map, i.e., if $\{x_{n}\}$ is a sequence in $\Omega$ 
such that $x_{n}\rightarrow x$ and  $(J_{*}oT)x_{n}\rightarrow y$, then $(J_{*}oT)x =y$. 
\end{definition}

\begin{definition}{\bf ($J-$fixed Point)}\cite{cidu}
A point $x^*\in \Omega$ is called a {\it $J-$fixed point of $T$} if $Tx^*=Jx^*$. The set of $J-$fixed points of $T$ will be denoted by $F_{J}(T)$.
\end{definition}

\begin{definition}{\bf (Generalized $J_{*}-$nonexpansive)}\cite{uoo}
 A map $T:\Omega\rightarrow \mathcal{X}^*$ will be called {\it generalized $J_{*}-$nonexpansive} if $F_{J}(T)\neq \emptyset$, and 
$\phi (p, (J_{*}oT)x) \leq \phi(p,x)$ for all $x\in \Omega$ and for all $p\in F_{J}(T)$.
\end{definition}

\begin{remark}\label{rmk2}
Exampes of generalized $J_{*}-$nonexpansive maps in Hilbert and more general Banach spaces were given in \cite{ceez} and \cite{uoo}.
\end{remark}

\noindent Let $\Omega$ be a nonempty closed subset of a smooth, strictly convex and reflexive Banach space $\mathcal{X}$ such that $J\Omega$ is closed and convex. For solving equilibrium problem, let us assume that a bifunction $f:J\Omega\times J\Omega\rightarrow \mathbb{R}$ satisfies the following conditions:
\begin{itemize}
 \item[(A1)]  $f(x^*,x^*)=0$ for all $x^*\in J\Omega$;
 \item[(A2)]  $f$ is monotone, i.e. $f(x^*,y^*)+f(y^*,x^*)\leq0$ for all $x^*,y^*\in J\Omega$;
 \item[(A3)]  for all $x^*,y^*,z^*\in J\Omega$, $\limsup_{t\downarrow0} f(tz^*+(1-t)x^*,y^*)\leq f(x^*,y^*)$;
 \item[(A4)]  for all $x^*\in J\Omega$, $f(x^*,\cdot)$ is convex and lower semicontinuous. 
\end{itemize}

\noindent With the above definitions, we now provide the lemmas we shall use.

\begin{lemma}\cite{Zh}\label{zhang}
Let $\mathcal{X}$ be a uniformly convex Banach space, $r > 0$ be a positive number, and $B_r(0)$ be a closed ball of $\mathcal{X}$. For any given points $\{ x_1, x_2, \cdots , x_N \} \subset B_r(0)$ and any given positive numbers $\{ \lambda_1, \lambda_2, \cdots , \lambda_N \}$ with $\sum_{n = 1} ^{N} \lambda_n = 1,$ there exists a continuous strictly increasing and convex function $g : [0, 2r) \to [0, \infty)$ with $g(0) = 0$ such that, for any $i,j \in \{ 1,2, \cdots N \}, \; i < j,$
\begin{equation}
    \| \sum_{n = 1} ^{N} \lambda_n x_n \|^2 \le \sum_{n = 1} ^{N} \lambda_n\|x_n\|^2 - \lambda_i \lambda_j g(\|x_i - x_j\|).
\end{equation}
\end{lemma}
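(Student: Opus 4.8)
The plan is to reduce everything to the two-point case and then bootstrap to $N$ points by a grouping argument. The analytic heart is the following quadratic form of uniform convexity: for the fixed radius $r$ there should exist a continuous, strictly increasing, convex function $g:[0,2r)\to[0,\infty)$ with $g(0)=0$ such that, for all $x,y\in B_r(0)$ and all $\lambda\in[0,1]$,
\[
\|\lambda x+(1-\lambda)y\|^2\le \lambda\|x\|^2+(1-\lambda)\|y\|^2-\lambda(1-\lambda)\,g(\|x-y\|).
\]
Granting this two-point inequality (the same $g$ will serve in the statement of the lemma), the $N$-point inequality follows cheaply, so I would first isolate and prove the two-point statement and then perform the reduction.

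To build $g$ I would start from the modulus of convexity $\delta_{\mathcal{X}}$, which is positive on $(0,2]$ by hypothesis. First I would record that $\|\cdot\|^2$ is convex (as the composition of the convex norm with the convex increasing map $t\mapsto t^2$ on $[0,\infty)$), so that the left-hand side is always at most the right-hand side without the last term; the content is the quantitative gap. For $\lambda=\tfrac12$ the gap is controlled directly by $\delta_{\mathcal{X}}$ after rescaling $x,y$ to the unit ball, giving a midpoint estimate $\|\tfrac{x+y}{2}\|^2\le \tfrac12\|x\|^2+\tfrac12\|y\|^2-h(\|x-y\|)$ for a suitable $h>0$ on $(0,2r)$. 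I would then define $g$ as a continuous, strictly increasing, convex minorant of the convexity defect
\[
t\ \longmapsto\ \inf\Big\{\tfrac{\lambda\|x\|^2+(1-\lambda)\|y\|^2-\|\lambda x+(1-\lambda)y\|^2}{\lambda(1-\lambda)}:\ x,y\in B_r(0),\ \|x-y\|\ge t,\ \lambda\in(0,1)\Big\},
\]
checking that this infimum is strictly positive for $t>0$ (this is exactly where uniform convexity is used, in passing from the midpoint case to general $\lambda$) and then smoothing it downward to obtain the required regularity with $g(0)=0$. Verifying positivity of the defect and producing a convex, strictly increasing minorant without destroying the inequality is the step I expect to be the main obstacle.

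For the reduction to $N$ points, fix $i<j$ and set $s=\lambda_i+\lambda_j\in(0,1]$ and $z=\frac{\lambda_i}{s}x_i+\frac{\lambda_j}{s}x_j\in B_r(0)$. Writing $\sum_{n}\lambda_n x_n=s\,z+\sum_{n\ne i,j}\lambda_n x_n$, the coefficients $s$ and $\{\lambda_n\}_{n\ne i,j}$ are nonnegative and sum to $1$, so convexity of $\|\cdot\|^2$ (Jensen) gives
\[
\Big\|\sum_{n}\lambda_n x_n\Big\|^2\le s\|z\|^2+\sum_{n\ne i,j}\lambda_n\|x_n\|^2.
\]
Applying the two-point inequality to $z$ with weight $\lambda=\lambda_i/s$ yields $s\|z\|^2\le \lambda_i\|x_i\|^2+\lambda_j\|x_j\|^2-\frac{\lambda_i\lambda_j}{s}g(\|x_i-x_j\|)$. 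Substituting and using $s\le 1$, so that $\frac{1}{s}\ge 1$ and hence $\frac{\lambda_i\lambda_j}{s}g\ge \lambda_i\lambda_j g$ (as $g\ge 0$), collapses the bound to $\big\|\sum_n\lambda_n x_n\big\|^2\le \sum_n\lambda_n\|x_n\|^2-\lambda_i\lambda_j g(\|x_i-x_j\|)$, which is the claim. Since the same $g$ works for every pair $i<j$, the construction of $g$ in the second step finishes the proof.
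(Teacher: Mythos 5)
The paper offers no proof of this lemma at all: it is imported verbatim from Zhang \cite{Zh}, so there is nothing internal to compare your argument against, and your attempt must be judged on its own. Your reduction of the $N$-point inequality to the two-point inequality is correct and complete: writing $\sum_n\lambda_n x_n = sz+\sum_{n\neq i,j}\lambda_n x_n$ with $s=\lambda_i+\lambda_j$ and $z=(\lambda_i x_i+\lambda_j x_j)/s$, applying Jensen's inequality for the convex function $\|\cdot\|^2$, then the two-point bound scaled by $s$, and finally $1/s\geq 1$, is exactly how such pairwise-defect inequalities are obtained. Moreover, the two-point inequality you isolate is precisely Xu's classical characterization of uniform convexity (Xu, \emph{Inequalities in Banach spaces with applications}, Nonlinear Anal.\ 16 (1991), 1127--1138), which is also what Zhang's own proof rests on; citing it would legitimately close the proof.

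What you have not done is prove that two-point inequality, and your sketch of it contains one misattribution and two genuine gaps. The misattribution: uniform convexity is \emph{not} what gets you from the midpoint case to general $\lambda$; that step is soft, because the defect $h(\lambda):=\lambda\|x\|^2+(1-\lambda)\|y\|^2-\|\lambda x+(1-\lambda)y\|^2$ is concave in $\lambda$ and vanishes at $\lambda\in\{0,1\}$, which yields $h(\lambda)\geq 2\lambda(1-\lambda)\,h(1/2)$ directly. Uniform convexity enters only in the midpoint estimate itself, and there your plan as stated would fail: rescaling $x,y$ into the unit ball by dividing by $r$ loses all information when $\|x\|,\|y\|$ are small, and the modulus $\delta_{\mathcal{X}}$ alone cannot control the defect when $\|x\|$ and $\|y\|$ differ. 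The standard repair is a case split: if $\big|\|x\|-\|y\|\big|$ is large, the defect is already at least $\tfrac14\big(\|x\|-\|y\|\big)^2$ by the triangle inequality; otherwise rescale by $R=\max(\|x\|,\|y\|)\geq\tfrac12\|x-y\|$ and apply $\delta_{\mathcal{X}}$ in the unit ball. Finally, for the regularity of $g$, your ``smoothing downward'' can be made precise by taking the greatest convex minorant of the (nondecreasing, positive) defect function: one checks it remains strictly positive on $(0,2r)$, and any nonnegative convex function vanishing at $0$ and positive elsewhere is automatically continuous and strictly increasing. So your skeleton is sound and every gap is fillable by standard arguments, but as written the analytic core of the lemma is flagged as an obstacle rather than proven; either fill it as above or simply cite Xu's inequality.
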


\begin{lemma}\cite{b2}\label{man}
 Let $\mathcal{X}$ be a real smooth and uniformly convex Banach space, and let $\{x_n\}$ and $\{y_n\}$ be two sequences of $\mathcal{X}$. If either $\{x_n\}$ or $\{y_n\}$
 is bounded and $\phi(x_n,y_n)\to0$ as $n\to \infty$, then $\|x_n-y_n\|\to0$ as $n\to\infty$.\label{bd}
\end{lemma}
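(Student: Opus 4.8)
The plan is to recognize this as the Kamimura--Takahashi lemma and to prove it in three stages: first reduce to the case where both sequences are bounded, then establish a pointwise lower bound of the form $\phi(x,y)\ge \tfrac12\,g(\|x-y\|)$ valid on a fixed ball, and finally argue by contradiction.

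First I would reduce to joint boundedness. By the left-hand inequality in \eqref{fi} we have $(\|x_n\|-\|y_n\|)^2\le \phi(x_n,y_n)\to 0$, so $\big|\,\|x_n\|-\|y_n\|\,\big|\to 0$. If, say, $\{x_n\}$ is bounded, this immediately forces $\{y_n\}$ to be bounded as well (and the roles are symmetric), so I may fix $M>0$ with $\|x_n\|,\|y_n\|\le M$ for all $n$; all the points then lie in the closed ball $B_M(0)$.

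The heart of the argument is the inequality
\[
\phi(x,y)\ \ge\ \tfrac12\,g(\|x-y\|)\qquad (x,y\in B_M(0)),
\]
where $g$ is the modulus furnished by Lemma \ref{zhang}. To obtain it I would apply Lemma \ref{zhang} with $N=2$ and $\lambda_1=\lambda_2=\tfrac12$ to get
\[
\Big\|\tfrac{x+y}{2}\Big\|^2\ \le\ \tfrac12\|x\|^2+\tfrac12\|y\|^2-\tfrac14\,g(\|x-y\|),
\]
and combine it with the subgradient inequality for the convex function $\|\cdot\|^2$, whose G\^ateaux derivative at $y$ equals $2Jy$ by smoothness: this yields $\big\|\tfrac{x+y}{2}\big\|^2\ge \|y\|^2+\langle Jy,x-y\rangle=\langle x,Jy\rangle$. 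Substituting the two bounds into $\phi(x,y)=\|x\|^2+\|y\|^2-2\langle x,Jy\rangle$ and using $\langle x,Jy\rangle\le\big\|\tfrac{x+y}{2}\big\|^2$ collapses everything to the claimed bound. I expect the main obstacle to be the bookkeeping around $g$: one must be careful that the modulus in Lemma \ref{zhang} depends only on the radius $M$ (so that a single function $g$ serves for all pairs in the ball), and that smoothness is genuinely invoked to make $J$ single valued, so that the subgradient inequality holds with $Jy$ itself.

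Finally I would conclude by contradiction. If $\|x_n-y_n\|\not\to 0$, pass to a subsequence with $\|x_{n_k}-y_{n_k}\|\ge\epsilon>0$ for all $k$; then the bound above gives $\phi(x_{n_k},y_{n_k})\ge\tfrac12\,g(\epsilon)>0$, since $g$ is strictly increasing with $g(0)=0$. This contradicts $\phi(x_n,y_n)\to 0$, and therefore $\|x_n-y_n\|\to 0$, as asserted.
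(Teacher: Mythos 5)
Your argument is correct, and it is worth noting at the outset that the paper itself gives no proof of Lemma \ref{man}: the statement is simply cited from Kamimura and Takahashi \cite{b2}. Your route also differs from the proof in that source. The classical argument there runs: after the same reduction to joint boundedness via \eqref{fi}, one assumes $\|x_{n_k}-y_{n_k}\|\ge\epsilon$ along a subsequence, passes to further subsequences with $\|x_{n_k}\|\to a$ and $\|y_{n_k}\|\to a$, dismisses $a=0$ (both sequences then tend to $0$ in norm), and for $a>0$ normalizes: from $\phi(x_{n_k},y_{n_k})\to 0$ one gets $\langle x_{n_k},Jy_{n_k}\rangle\to a^2$, hence $\bigl\|\tfrac{x_{n_k}}{\|x_{n_k}\|}+\tfrac{y_{n_k}}{\|y_{n_k}\|}\bigr\|\to 2$, which by uniform convexity forces $\bigl\|\tfrac{x_{n_k}}{\|x_{n_k}\|}-\tfrac{y_{n_k}}{\|y_{n_k}\|}\bigr\|\to 0$ and so $\|x_{n_k}-y_{n_k}\|\to 0$, a contradiction. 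You instead extract from Lemma \ref{zhang} (with $N=2$, $\lambda_1=\lambda_2=\tfrac12$) and the subgradient inequality $\|\tfrac{x+y}{2}\|^2\ge\langle x,Jy\rangle$ the uniform estimate $\phi(x,y)\ge\tfrac12\,g(\|x-y\|)$ on a fixed ball; your algebra checks out, and the conclusion is then immediate. Your version is more economical within this paper, since Lemma \ref{zhang} is already quoted, and the quantitative lower bound on $\phi$ it produces is independently useful; the classical proof has the advantage of needing only the bare definition of uniform convexity. Two small cautions. First --- you flagged this yourself, correctly --- as transcribed here Lemma \ref{zhang} literally allows $g$ to depend on the chosen points; if $g$ were allowed to vary with $n$, your final contradiction step would collapse (the values $g_{n_k}(\epsilon)$ could tend to $0$), so your proof genuinely requires the standard form of Zhang's inequality in which $g$ depends only on the radius, which is indeed what is proved in \cite{Zh}. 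Second, the stated domain of $g$ is $[0,2r)$, while two points of $B_M(0)$ can be at distance exactly $2M$; this is repaired by invoking the lemma on the ball of radius $2M$, so that $g$ is defined on $[0,4M)\supset[0,2M]$.
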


\begin{lemma}\cite{b1}\label{lem1}
Let $\Omega$ be a nonempty closed and convex subset of a smooth, strictly convex and reflexive Banach space $\mathcal{X}$. Then, the following are equivalent.\\
$(i)$ $\Omega$ is a sunny generalized nonexpansive retract of $\mathcal{X}$,\\
$(ii)$ $\Omega$ is a generalized nonexpansive retract of $\mathcal{X}$,\\
$(iii)$ $J\Omega$ is closed and convex.
\end{lemma}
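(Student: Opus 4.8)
The plan is to run the cycle $(iii)\Rightarrow(i)\Rightarrow(ii)\Rightarrow(iii)$. The implication $(i)\Rightarrow(ii)$ is immediate: a sunny generalized nonexpansive retraction is, by definition, a generalized nonexpansive retraction, so one simply forgets sunniness. Everything else rests on two elementary identities for the Lyapounov functional, namely the \emph{conjugacy identity}
\[
\phi(x,y)=\phi^{*}(Jy,Jx),\qquad x,y\in\mathcal{X},
\]
where $\phi^{*}(\xi,\eta)=\|\xi\|^{2}-2\langle J_{*}\eta,\xi\rangle+\|\eta\|^{2}$ is the Lyapounov functional of $\mathcal{X}^{*}$ (immediate from $\|Jx\|=\|x\|$ and $J_{*}J=I_{\mathcal{X}}$), and the \emph{three-point identity}
\[
\phi(a,b)=\phi(a,c)+\phi(c,b)+2\langle a-c,\,Jc-Jb\rangle .
\]
Since $\mathcal{X}$ is smooth, strictly convex and reflexive, so is $\mathcal{X}^{*}$, and the generalized projection $\Pi_{C}\xi:=\arg\min_{\eta\in C}\phi^{*}(\eta,\xi)$ onto a nonempty closed convex $C\subset\mathcal{X}^{*}$ exists and is unique.

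For $(iii)\Rightarrow(i)$ I would display the retraction explicitly. Assuming $J\Omega$ closed and convex, put
\[
R:=J_{*}\circ\Pi_{J\Omega}\circ J:\mathcal{X}\longrightarrow\Omega .
\]
For $p\in\Omega$ we have $Jp\in J\Omega$, so $\Pi_{J\Omega}(Jp)=Jp$ and $Rp=J_{*}Jp=p$; thus $R$ fixes $\Omega$ pointwise and $R^{2}=R$, i.e. $R$ is a retraction onto $\Omega$. Feeding the optimality of $\Pi_{J\Omega}$ into the three-point identity in $\mathcal{X}^{*}$ yields the descent estimate $\phi^{*}(\eta,\Pi_{J\Omega}\xi)\le\phi^{*}(\eta,\xi)$ for all $\eta\in J\Omega$; reading this through the conjugacy identity with $\eta=Jp$, $\xi=Jx$ gives exactly $\phi(Rx,p)\le\phi(x,p)$, so $R$ is generalized nonexpansive. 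Sunniness comes from the variational characterization of $\Pi_{J\Omega}$, which after transport by $J,J_{*}$ reads $\langle x-Rx,\,Jy-JRx\rangle\le0$ for all $y\in\Omega$; substituting $w=Rx+t(x-Rx)$ makes $w-Rx$ a scalar multiple of $x-Rx$, so $Rx$ again realizes the characterization at $w$ and $Rw=Rx$.

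The crux is $(ii)\Rightarrow(iii)$. Closedness of $J\Omega$ is automatic, since $J$ is a homeomorphism of $\mathcal{X}$ onto $\mathcal{X}^{*}$ with continuous inverse $J_{*}$ and therefore carries the closed set $\Omega$ to a closed set; the real content is \emph{convexity} of $J\Omega$. Given $x_{1},x_{2}\in\Omega$ and $\lambda\in[0,1]$, set
\[
\xi:=\lambda Jx_{1}+(1-\lambda)Jx_{2},\qquad x:=J_{*}\xi ,
\]
so that $Jx=\xi$. The decisive observation is that, because the convex combination sits in the \emph{dual} variable,
\[
\lambda\,\phi(a,x_{1})+(1-\lambda)\,\phi(a,x_{2})=\phi(a,x)+D\qquad(a\in\mathcal{X}),
\]
with $D:=\lambda\|x_{1}\|^{2}+(1-\lambda)\|x_{2}\|^{2}-\|x\|^{2}$ independent of $a$. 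Applying the generalized nonexpansiveness $\phi(Rx,x_{i})\le\phi(x,x_{i})$ in this identity, first with $a=Rx$ and then with $a=x$, and using $\phi(x,x)=0$, collapses everything to $\phi(Rx,x)\le0$; since $\phi\ge0$ this forces $\phi(Rx,x)=0$, whence $Rx=x$ by the separation property of $\phi$ valid in a smooth strictly convex space (or Lemma~\ref{man} applied to constant sequences). Hence $x\in\Omega$ and $\xi=Jx\in J\Omega$.

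I expect the only genuinely delicate point to be the bookkeeping just above: one must combine the two inequalities \emph{in the dual variable}, and it is precisely the placement of the duality map in the second slot of $\phi$—so that $\lambda Jx_{1}+(1-\lambda)Jx_{2}=Jx$—that lets the $a$-dependent terms align and the constant $D$ cancel. Nothing in the hypotheses convexifies $\Omega$ itself, and the proof must respect that convexity is a statement about $J\Omega$. Uniqueness of the sunny generalized nonexpansive retraction and the identification of any such $R$ with $J_{*}\Pi_{J\Omega}J$ then follow routinely once the three conditions are known to be equivalent.
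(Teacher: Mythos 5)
The paper never proves this lemma---it is imported with a citation (attributed to \cite{b1}, though the result is really Ibaraki--Takahashi \cite{itaka1})---so your proposal can only be compared with the standard literature argument, and structurally it reproduces it: the cycle $(iii)\Rightarrow(i)\Rightarrow(ii)\Rightarrow(iii)$, the explicit retraction $R=J_{*}\circ\Pi_{J\Omega}\circ J$ built from the generalized projection in $\mathcal{X}^{*}$ (legitimate, since $\mathcal{X}^{*}$ inherits smoothness, strict convexity and reflexivity), the transport of Alber's descent estimate and variational characterization through the conjugacy identity, and, for the crux $(ii)\Rightarrow(iii)$, the observation that $\lambda\phi(a,x_{1})+(1-\lambda)\phi(a,x_{2})=\phi(a,x)+D$ with $Jx=\lambda Jx_{1}+(1-\lambda)Jx_{2}$ and $D$ independent of $a$, which collapses the two nonexpansiveness inequalities to $\phi(Rx,x)\le 0$ and hence $Rx=x$ by the separation property of $\phi$ in a smooth strictly convex space. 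All of these computations check out, and your emphasis that convexity is a statement about $J\Omega$, not $\Omega$, is exactly right.

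There is, however, one genuine flaw: the claim that closedness of $J\Omega$ is ``automatic, since $J$ is a homeomorphism of $\mathcal{X}$ onto $\mathcal{X}^{*}$.'' Under the standing hypotheses $J$ is a bijection, but it is in general only norm-to-weak$^{*}$ continuous; norm-to-norm continuity of $J$ amounts essentially to Fr\'echet differentiability of the norm, which mere (G\^ateaux) smoothness does not provide even in reflexive spaces, and the same applies to $J_{*}$. So this step, as justified, fails. The repair is cheap and uses the very same trick as your convexity step: if $x_{n}\in\Omega$ and $Jx_{n}\to\xi$ in $\mathcal{X}^{*}$, put $x:=J_{*}\xi$; then generalized nonexpansiveness of $R$ gives $\phi(Rx,x_{n})-\phi(x,x_{n})=\|Rx\|^{2}-\|x\|^{2}-2\langle Rx-x,Jx_{n}\rangle\le 0$, and letting $n\to\infty$ (this uses only convergence of the pairings and of $\|x_{n}\|=\|Jx_{n}\|$, no continuity of $J_{*}$) yields $\phi(Rx,x)\le 0$, whence $x=Rx\in\Omega$ and $\xi=Jx\in J\Omega$. (Alternatively, since this paper also assumes $\Omega$ convex, one may use weak compactness of bounded sequences plus maximal monotonicity of $J$; but the argument just given is self-contained and needs no convexity of $\Omega$.) Two smaller points: your sunniness verification gives $Rw=Rx$ for $w=Rx+t(x-Rx)$ only when $t\ge 0$, since then $t\langle x-Rx,Jy-JRx\rangle\le 0$; the paper's definition literally requires $t\le 0$, for which the inequality reverses---but that definition is surely a misprint (with $t\le 0$ even the Hilbert-space metric projection is not ``sunny''), the standard definition being the one with $t\ge 0$. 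Finally, keep the smooth-strictly-convex separation property as your justification that $\phi(Rx,x)=0$ forces $Rx=x$: the parenthetical appeal to Lemma \ref{man} is not available here, as that lemma requires uniform convexity.
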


\begin{lemma}\cite{b1}\label{lem2}
Let $\Omega$ be a nonempty closed and convex subset of a smooth and  strictly convex Banach space $\mathcal{X}$ such that there exists a sunny generalized 
nonexpansive retraction $R$ from $\mathcal{X}$ onto $\Omega$.Then, the following hold.\\
$(i)$ $z=Rx$ iff $\langle x-z, Jy-Jz\rangle \leq 0$ for all $y\in \Omega$,\\
$(ii)$ $\phi (x,Rx) + \phi (Rx, z) \leq \phi (x,z)$ {\bl for all $z\in \Omega$}.  
\end{lemma}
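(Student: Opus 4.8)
The plan is to base everything on the three-point identity
\begin{equation*}
\phi(x,z) = \phi(x,Rx) + \phi(Rx,z) + 2\langle x - Rx,\, J(Rx) - Jz\rangle ,
\end{equation*}
valid for all $x \in \mathcal{X}$ and $z \in \Omega$, which one obtains by expanding each term via the definition \eqref{Lya} of $\phi$ and using $\|Rx\|^2 = \langle Rx, J(Rx)\rangle$. Granting this, part $(ii)$ becomes an immediate consequence of the variational characterization $(i)$: once $(i)$ is known for the point $Rx$, taking $y = z$ there yields $\langle x - Rx, J(Rx) - Jz\rangle \ge 0$, so the bracketed term above is nonnegative and $\phi(x,Rx) + \phi(Rx,z) \le \phi(x,z)$. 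Thus the real work is to establish $(i)$.

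For the forward implication of $(i)$, I would assume $z = Rx$, fix $y \in \Omega$, and exploit both defining properties of $R$. Since $y \in \Omega = F(R)$ and $R$ is generalized nonexpansive, $\phi(Rw,y) \le \phi(w,y)$ for every $w \in \mathcal{X}$; applying this with $w_t := Rx + t(x - Rx)$ and invoking sunniness (read along the ray pointing away from $\Omega$, i.e.\ $t \ge 0$, which is the orientation matching the claimed inequality) gives $R w_t = Rx$ and hence
\begin{equation*}
\phi(Rx,y) \le \phi\big(Rx + t(x - Rx),\, y\big) =: g(t) \qquad \text{for all } t \ge 0 .
\end{equation*}
So $g$ is minimized over $[0,\infty)$ at $t = 0$. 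Because $\mathcal{X}$ is smooth, $J$ is single-valued and $s \mapsto \|Rx + s(x - Rx)\|^2$ is differentiable with derivative $2\langle x - Rx,\, J(Rx + s(x - Rx))\rangle$, so $g$ is differentiable and
\begin{equation*}
g'(0) = 2\langle x - Rx,\, J(Rx)\rangle - 2\langle x - Rx,\, Jy\rangle = -2\langle x - Rx,\, Jy - J(Rx)\rangle .
\end{equation*}
Minimality forces $g'(0) \ge 0$, which is exactly $\langle x - Rx,\, Jy - J(Rx)\rangle \le 0$ for all $y \in \Omega$.

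For the reverse implication, suppose $z \in \Omega$ satisfies $\langle x - z, Jy - Jz\rangle \le 0$ for all $y \in \Omega$; I want $z = Rx$. By the forward implication the point $Rx$ satisfies the same inequality. Testing $z$'s inequality at $y = Rx$ and $Rx$'s inequality at $y = z$, and subtracting, the $x$-terms cancel and one is left with
\begin{equation*}
\langle Rx - z,\, J(Rx) - Jz\rangle \le 0 .
\end{equation*}
On the other hand $J$ is monotone, and since $\mathcal{X}$ is strictly convex it is in fact strictly monotone, so this forces $Rx = z$. This proves $(i)$, and $(ii)$ then follows as described above.

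The step I expect to be the main obstacle is the forward direction of $(i)$: one must correctly combine the two defining properties of $R$ (generalized nonexpansiveness supplies $\phi(Rx,y) \le \phi(w_t,y)$, while sunniness pins $R w_t$ down to $Rx$) and then justify the differentiation of $g$ at $t = 0$ from the smoothness of $\mathcal{X}$, paying attention to the one-sided nature of the minimality and to the orientation of the ray in the sunny condition, which is what fixes the sign in the stated inequality. The uniqueness argument in the reverse direction is then routine given strict monotonicity of $J$.
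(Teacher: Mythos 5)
Your proposal cannot be compared against an internal argument, because the paper does not prove Lemma \ref{lem2} at all: it is quoted with a citation (the result is the characterization of sunny generalized nonexpansive retractions going back to Ibaraki--Takahashi \cite{itaka1}), so your write-up stands or falls on its own merits. On those merits it is correct. The three-point identity $\phi(x,z)=\phi(x,Rx)+\phi(Rx,z)+2\langle x-Rx,\,J(Rx)-Jz\rangle$ checks out by direct expansion using $\|Rx\|^2=\langle Rx,J(Rx)\rangle$; deducing $(ii)$ from $(i)$ by taking $y=z\in\Omega$ is valid; the forward direction of $(i)$ correctly combines sunniness (to pin $Rw_t$ at $Rx$) with generalized nonexpansiveness applied at the fixed point $y\in\Omega=F(R)$, and the differentiation of $g(t)=\phi(Rx+t(x-Rx),y)$ is justified since smoothness makes $t\mapsto\|u+tv\|^2$ differentiable with derivative $2\langle v,J(u+tv)\rangle$ (the middle term of $\phi$ is affine in $t$, so only this norm term matters); the converse via strict monotonicity of $J$, which does follow from strict convexity, is routine as you say. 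One point you flagged deserves to be made explicit: the paper's own definition of sunny uses $t\le 0$, and under that orientation your minimality argument would run over $(-\infty,0]$, force $g'(0)\le 0$, and yield the \emph{reversed} inequality $\langle x-Rx,\,Jy-J(Rx)\rangle\ge 0$, which is incompatible with the lemma. The standard definition in the source literature takes $t\ge 0$, which is what the lemma needs and what you used; the paper's $t\le 0$ is evidently a misprint, and your decision to read the ray in the $t\ge 0$ orientation is the correct resolution rather than a gap in your argument.
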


\begin{lemma}\cite{itaka1}\label{ww1}
 Let $\Omega$ be a nonempty closed sunny generalized nonexpansive retract of a smooth and strictly convex Banach space $\mathcal{X}$. Then the sunny generalized nonexpansive
 retraction from $\mathcal{X}$ to $\Omega$ is uniquely determined.
\end{lemma}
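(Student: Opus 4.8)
The plan is to deduce uniqueness directly from the variational characterization of the sunny generalized nonexpansive retraction recorded in Lemma \ref{lem2}$(i)$. The ambient space here is smooth and strictly convex, which is exactly the hypothesis of Lemma \ref{lem2}, so that result is available. The key observation is that the inequality appearing in Lemma \ref{lem2}$(i)$, namely $\langle x - z, Jy - Jz \rangle \leq 0$ for all $y \in \Omega$, refers only to the datum $x$, the candidate point $z \in \Omega$, and the duality map $J$; it makes no reference to which retraction produced $z$. Hence that inequality pins down $z$ unambiguously, and any two sunny generalized nonexpansive retractions must agree.

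Concretely, I would argue as follows. Suppose $R_1$ and $R_2$ are both sunny generalized nonexpansive retractions of $\mathcal{X}$ onto $\Omega$, fix $x \in \mathcal{X}$, and set $z := R_1 x \in \Omega$. Applying the forward implication of Lemma \ref{lem2}$(i)$ to $R_1$ gives $\langle x - z, Jy - Jz \rangle \leq 0$ for every $y \in \Omega$. Since $z \in \Omega$ satisfies precisely this inequality, the converse implication of Lemma \ref{lem2}$(i)$ applied to $R_2$ yields $z = R_2 x$. Therefore $R_1 x = R_2 x$ for every $x \in \mathcal{X}$, that is, $R_1 = R_2$.

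The argument is short and I do not anticipate a genuine obstacle; the only point requiring care is to invoke the two directions of the biconditional in Lemma \ref{lem2}$(i)$ for the two different retractions, and to observe that the variational inequality itself is independent of the retraction. If one preferred an argument not leaning on the ``iff'' as a uniqueness statement, one could instead set $z_1 = R_1 x$ and $z_2 = R_2 x$, take $y = z_2$ in the inequality for $z_1$ and $y = z_1$ in the inequality for $z_2$, and add the two resulting inequalities to obtain $\langle z_2 - z_1, Jz_2 - Jz_1 \rangle \leq 0$. One then invokes the strict monotonicity of $J$ on the strictly convex space $\mathcal{X}$ to conclude $z_1 = z_2$; the only would-be difficulty there is justifying strict monotonicity, which follows from the elementary bound $\langle u - v, Ju - Jv \rangle \geq (\|u\| - \|v\|)^2$ together with strict convexity. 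Either route delivers the stated uniqueness.
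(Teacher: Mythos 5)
Your proof is correct, but note that the paper itself offers no proof of this lemma: it is quoted from Ibaraki and Takahashi \cite{itaka1}, so there is no internal argument to compare against. Your route --- reading the variational inequality in Lemma \ref{lem2}$(i)$ as a characterization of $Rx$ that makes no reference to $R$, then applying the two directions of the equivalence to two competing retractions --- is precisely the standard proof from that reference, and both of your variants are sound: the direct one, and the one where you take $y=z_2$ in the inequality for $z_1$, take $y=z_1$ in the inequality for $z_2$, add to get $\langle z_2-z_1, Jz_2-Jz_1\rangle\le 0$, and conclude $z_1=z_2$ from strict monotonicity of $J$ (which does follow from strict convexity as you sketch: equality in $\langle u-v,Ju-Jv\rangle\ge(\|u\|-\|v\|)^2$ forces $\|u\|=\|v\|$ and $\langle v,Ju\rangle=\|v\|\,\|Ju\|$, and a norm-attaining functional on a strictly convex space attains its norm at a unique point of the sphere). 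One wrinkle deserves mention: as recorded in this paper, Lemma \ref{lem2} assumes $\Omega$ is closed \emph{and convex}, whereas Lemma \ref{ww1} assumes only that $\Omega$ is a nonempty closed sunny generalized nonexpansive retract (in this theory the natural hypothesis is convexity of $J\Omega$, not of $\Omega$; cf.\ Lemma \ref{lem1}). So, strictly within the paper's stated toolkit, your argument yields uniqueness only for convex retracts. This defect is inherited from the paper's phrasing rather than from your reasoning: in the original source the characterization $z=Rx$ if and only if $\langle x-z, Jy-Jz\rangle\le 0$ for all $y\in\Omega$ is established for closed retracts without any convexity assumption on $\Omega$, and with that version your proof gives the lemma in full generality.
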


\begin{lemma}\cite{Blum}\label{ww3}
 Let $\Omega$ be a nonempty closed subset of a smooth, strictly convex and reflexive Banach space $\mathcal{X}$ such that $J\Omega$ is closed and convex, let $f$ be a bifunction
 from $J\Omega\times J\Omega$ to $\mathbb{R}$ satisfying $(A1)-(A4)$. For $r>0$ and let $x\in E$. Then there exists $z\in \Omega$ such that
 $f(Jz,Jy)+\frac{1}{r}\langle z-x, Jy-Jz\rangle\geq0,~~\forall~~y\in \Omega.$
\end{lemma}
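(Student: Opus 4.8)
The plan is to pass to the dual space and solve an equivalent equilibrium problem for an augmented bifunction by a Fan--KKM intersection argument, letting the regularization term supply the a priori coercivity that replaces compactness. Writing $z^*=Jz$, $y^*=Jy$ and recalling $J_*=J^{-1}$, the inequality to be solved becomes $G(z^*,y^*)\ge 0$ for all $y^*\in J\Omega$, where
\[ G(z^*,y^*):=f(z^*,y^*)+\tfrac{1}{r}\langle J_*z^*-x,\,y^*-z^*\rangle. \]
First I would record that $G$ inherits the structural properties of $f$ on the closed convex set $J\Omega\subset\mathcal{X}^*$: clearly $G(z^*,z^*)=0$ by (A1); $G(z^*,\cdot)$ is convex and lower semicontinuous by (A4), since the added term is affine and continuous in the second slot; and $G$ is monotone, because for $a=J_*z^*$, $b=J_*y^*$ the augmentation contributes $\langle a-x,Jb-Ja\rangle+\langle b-x,Ja-Jb\rangle=-\langle a-b,Ja-Jb\rangle\le 0$ (the fixed point $x$ cancels), so monotonicity of $J$ together with (A2) yields $G(z^*,y^*)+G(y^*,z^*)\le 0$. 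Finally, since $\mathcal{X}$ is uniformly smooth and uniformly convex, $J_*$ is uniformly continuous on bounded subsets of $\mathcal{X}^*$, so $G$ still satisfies the upper hemicontinuity condition (A3) inherited from $f$.

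Next I would establish the Minty reformulation: $z^*$ solves $G(z^*,y^*)\ge 0\ \forall y^*$ iff $G(y^*,z^*)\le 0\ \forall y^*$. The forward implication is immediate from monotonicity; the converse follows by inserting $y^*_t=ty^*+(1-t)z^*$, using convexity of $G(y^*_t,\cdot)$ to get $G(y^*_t,y^*)\ge 0$, and letting $t\downarrow 0$ with (A3). This is useful because the Minty-type sets $M(y^*):=\{z^*:G(y^*,z^*)\le 0\}$ are weakly closed (weak lower semicontinuity of the convex l.s.c. map $G(y^*,\cdot)$), whereas the direct sets need not be. A short computation using monotonicity and $G(w^*,w^*)=0$ shows that $\{M(y^*)\}$ is a KKM family: if $w^*=\sum_i\lambda_i y^*_i$ lay in no $M(y^*_i)$, then $G(w^*,y^*_i)\le-G(y^*_i,w^*)<0$, whence $0=G(w^*,w^*)\le\sum_i\lambda_i G(w^*,y^*_i)<0$, a contradiction.

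The missing ingredient is compactness, supplied by coercivity. Fixing any $y_0^*\in J\Omega$ and using $\langle J_*z^*,z^*\rangle=\|z^*\|^2$, the augmentation behaves like $-\tfrac{1}{r}\|z^*\|^2$ plus lower-order terms, while (A2) together with an affine minorant of the convex l.s.c. function $f(y_0^*,\cdot)$ bounds $f(z^*,y_0^*)$ by $O(\|z^*\|)$; hence $G(z^*,y_0^*)\to-\infty$ as $\|z^*\|\to\infty$, so there is $R_0$ with $G(z^*,y_0^*)<0$ whenever $\|z^*\|>R_0$. I would then fix $R>\max\{R_0,\|y_0^*\|\}$, set $K:=J\Omega\cap\{\|\cdot\|\le R\}$ (nonempty, convex, and weakly compact by reflexivity of $\mathcal{X}^*$), and apply the Fan--KKM lemma to the weakly closed family $\{M(y^*)\cap K\}_{y^*\in K}$ to obtain $\bar z^*\in K$ with $G(y^*,\bar z^*)\le 0$ for all $y^*\in K$; the Minty argument upgrades this to $G(\bar z^*,y^*)\ge 0$ for all $y^*\in K$. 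Testing against $y_0^*$ gives $G(\bar z^*,y_0^*)\ge 0$, which by the choice of $R_0$ forces $\|\bar z^*\|\le R_0<R$; thus for arbitrary $y^*\in J\Omega$ the segment point $ty^*+(1-t)\bar z^*$ lies in $K$ for small $t>0$, and convexity of $G(\bar z^*,\cdot)$ yields $0\le G(\bar z^*,ty^*+(1-t)\bar z^*)\le t\,G(\bar z^*,y^*)$, so $\bar z^*$ solves the problem on all of $J\Omega$. Setting $z=J_*\bar z^*\in\Omega$ then gives the claimed inequality $f(Jz,Jy)+\tfrac{1}{r}\langle z-x,Jy-Jz\rangle\ge 0$ for all $y\in\Omega$.

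The part I expect to be most delicate is the coercivity step together with the passage from the truncation $K$ to all of $J\Omega$: one must control the mixed pairing $\langle J_*z^*-x,\,y_0^*-z^*\rangle$ (where the a priori bound on solutions crucially emerges), secure the affine minorant of $f(y_0^*,\cdot)$, and verify that the weak-compactness and weak-closedness hypotheses of Fan--KKM genuinely hold in this infinite-dimensional, non-compact setting. By comparison, the verification of (A1)--(A4) for $G$ and the Minty equivalence are routine.
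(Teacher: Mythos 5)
The paper gives no proof of this lemma---it is quoted from Blum--Oettli \cite{Blum}---and your argument is essentially the standard one behind that citation: transport the problem to the closed convex set $J\Omega\subset\mathcal{X}^*$, verify that the augmented bifunction $G$ retains (A1)--(A4), use the Minty sets together with Fan--KKM on a weakly compact truncation (available by reflexivity of $\mathcal{X}^*$), let the term $-\tfrac{1}{r}\langle J_*z^*,z^*\rangle=-\tfrac{1}{r}\|z^*\|^2$ supply coercivity, and remove the truncation via the strict bound $\|\bar z^*\|\le R_0<R$. The individual verifications are sound: the monotonicity of $G$ from $\langle a-b,Ja-Jb\rangle\ge 0$, the weak closedness and KKM property of the Minty sets, the affine-minorant estimate $f(z^*,y_0^*)=O(\|z^*\|)$ via (A2), and the final segment argument using convexity of $G(\bar z^*,\cdot)$ and $G(\bar z^*,\bar z^*)=0$.

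One repair is needed. The lemma assumes only that $\mathcal{X}$ is smooth, strictly convex and reflexive, so you may not invoke uniform smoothness and uniform convexity to get uniform continuity of $J_*$ on bounded sets; that hypothesis is not available here. Fortunately, the only place you use it is the hemicontinuity of $G$ in its first slot along segments, i.e.\ convergence of $\langle J_*y_t^*-x,\,v^*-y_t^*\rangle$ as $t\downarrow 0$ for $y_t^*=tw^*+(1-t)u^*$, and this needs much less: since $\mathcal{X}$ is strictly convex and reflexive, $\mathcal{X}^*$ is smooth, so its duality map $J_*$ is single-valued and norm-to-weak$^*$ (hence, by reflexivity, norm-to-weak) continuous; pairing the bounded, weakly convergent net $J_*y_t^*$ against the norm-convergent net $v^*-y_t^*$ gives the required limit. (Note that the weak closedness of the Minty sets needs no continuity of $J_*$ at all, since there the affine term carries $J_*$ applied to the \emph{fixed} first argument, so (A4) suffices.) With that substitution your proof is valid under the stated hypotheses.
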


\begin{lemma}\cite{ttaka}\label{ww4}
 Let $\Omega$ be a nonempty closed subset of a smooth, strictly convex and reflexive Banach space $\mathcal{X}$ such that $J\Omega$ is closed and convex, let $f$ be a bifunction
 from $J\Omega\times J\Omega$ to $\mathbb{R}$ satisfying $(A1)-(A4)$. For $r>0$ and let $x\in \mathcal{X}$, define a mapping $T_r(x):\mathcal{X}\rightarrow \Omega$ as follows:
 $$T_r(x)={\bl \Bigg\{}z\in \Omega:\sum_{i=1}^k f_i(Jz,Jy)+\langle y-z, Az\rangle+\varphi(y)-\varphi(z)+\frac{1}{r}\langle y - z, Jz-Jx\rangle\geq0,~~\forall~~y\in \Omega{\bl \Bigg\}}.$$ Then the following hold:
 \begin{itemize}
 \item[(i)]  $T_r$ is single valued;
 \item[(ii)]  for all $x,y\in \mathcal{X}$, $\langle T_rx-T_ry, JT_rx-JT_ry\rangle\leq\langle x-y, JT_rx-JT_ry\rangle$;
 \item[(iii)] $F(T_r)=GMEP(f,A,\varphi)$;
 \item[(iv)] $\phi (p,T_r(x)) + \phi (T_r(x), x) \leq \phi (p,x)$ for all $p\in F(T_r)$, $x\in \mathcal{X}$.
 \item[(v)]  $GMEP(f,A,\varphi)$ is closed and $JGMEP(f,A,\varphi)$ is closed and convex. 
\end{itemize}
\end{lemma}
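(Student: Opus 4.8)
The plan is to absorb the three ingredients of the defining inequality into a single bifunction and then reduce everything to the basic resolvent supplied by Lemma \ref{ww3}. Since $\mathcal{X}$ is smooth, strictly convex and reflexive, $J$ is a bijection with $J^{-1}=J_{*}$, so I can define $G:J\Omega\times J\Omega\to\mathbb{R}$ by
\[ G(x^{*},y^{*})=\sum_{i=1}^{k}f_i(x^{*},y^{*})+\langle J_{*}y^{*}-J_{*}x^{*},A J_{*}x^{*}\rangle+\varphi(y^{*})-\varphi(x^{*}), \]
so that $z\in T_r(x)$ exactly when $G(Jz,Jy)+\frac{1}{r}\langle y-z,Jz-Jx\rangle\ge0$ for all $y\in\Omega$. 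The first task is to verify that $G$ inherits conditions (A1)--(A4): (A1) is immediate; (A2) follows from monotonicity of each $f_i$ together with monotonicity of $A$, since the $A$-contribution to $G(x^{*},y^{*})+G(y^{*},x^{*})$ collapses to $-\langle u-v,Au-Av\rangle\le0$ with $u=J_{*}x^{*},\,v=J_{*}y^{*}$; (A3) uses continuity of $A$, continuity of $J_{*}$, and lower semicontinuity of $\varphi$; and (A4) uses convexity and lower semicontinuity of each $f_i(x^{*},\cdot)$ and of $\varphi$. Once these hold for $G$, the Blum--Oettli-type existence result of Lemma \ref{ww3}, applied to $G$ in place of $f$, yields $T_r(x)\ne\emptyset$ for every $x$.

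For (i) and (ii) I would argue simultaneously. Given $x_1,x_2\in\mathcal{X}$, set $z_i=T_rx_i$, write the defining inequality for $z_1$ with $y=z_2$ and for $z_2$ with $y=z_1$, add them, and discard the bifunction terms using (A2), namely $G(Jz_1,Jz_2)+G(Jz_2,Jz_1)\le0$. What survives rearranges precisely to
\[ \langle z_1-z_2,Jz_1-Jz_2\rangle\le\langle x_1-x_2,Jz_1-Jz_2\rangle, \]
which is (ii). Taking $x_1=x_2$ forces $\langle z_1-z_2,Jz_1-Jz_2\rangle\le0$; since $J$ is strictly monotone on a strictly convex space this gives $z_1=z_2$, proving single-valuedness (i).

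Statement (iii) is the special case $x=z$ of the definition: the penalty term $\frac{1}{r}\langle y-z,Jz-Jz\rangle$ vanishes, so $T_rz=z$ is equivalent to $G(Jz,Jy)\ge0$ for all $y$, i.e. to $z\in GMEP(f,A,\varphi)$. For (iv), with $z=T_rx$ and $p\in F(T_r)$, I would use the three-point identity
\[ \phi(p,z)+\phi(z,x)=\phi(p,x)+2\langle z-p,Jz-Jx\rangle, \]
so it suffices to show $\langle z-p,Jz-Jx\rangle\le0$. This follows by writing the defining inequality for $z$ with $y=p$, noting $G(Jp,Jz)\ge0$ because $p\in GMEP(f,A,\varphi)$ by (iii), adding the two, and invoking (A2) once more.

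Finally, for (v) I would first establish the Minty-type equivalence that, under (A1)--(A4), $z\in GMEP(f,A,\varphi)$ if and only if $G(Jy,Jz)\le0$ for every $y\in\Omega$ (the nontrivial direction uses (A1), (A3) and convexity (A4) along the segment from $Jz$ to $Jy$ in the convex set $J\Omega$). Consequently
\[ JGMEP(f,A,\varphi)=J\Omega\cap\bigcap_{y\in\Omega}\{z^{*}\in\mathcal{X}^{*}:G(Jy,z^{*})\le0\}, \]
and each set in this intersection is closed and convex because $G(Jy,\cdot)$ is convex and lower semicontinuous by (A4), while $J\Omega$ is closed and convex by hypothesis; hence $JGMEP(f,A,\varphi)$ is closed and convex, and since $J_{*}$ is continuous, $GMEP(f,A,\varphi)=J_{*}(JGMEP(f,A,\varphi))$ is closed. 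The genuine obstacle I expect is the opening step, verifying (A2)--(A4) for the combined bifunction $G$: this is exactly where the monotonicity and continuity of $A$ and the convexity and lower semicontinuity of $\varphi$ must be used, and where the nonlinearity of $J_{*}$ entering the $A$-term makes the convexity requirement (A4) the most delicate point to secure.
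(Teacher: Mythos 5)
The paper itself offers no proof of this lemma: it is imported by citation from \cite{ttaka} (pure equilibrium case, bifunction on the dual space) and, for the mixed terms, patterned on \cite{Zh}. So your proposal can only be measured against the standard literature argument, and your skeleton is indeed that argument: absorb $A$ and $\varphi$ into a single bifunction $G$, get existence from Lemma \ref{ww3}, then run the usual resolvent computations for (i)--(v). Your treatments of (iii), (iv), and the Minty step in (v) are the standard ones and are correct as computations.

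The decisive problem is the one you flag in your last sentence and then leave unresolved: verifying (A4) (and (A3)) for $G$ is not a ``delicate point to secure,'' it is a genuine obstruction, and everything upstream of it depends on it. Condition (A4) requires $y^*\mapsto \langle J_{*}y^{*}-J_{*}x^{*},AJ_{*}x^{*}\rangle$ to be convex and lower semicontinuous on $J\Omega$; since $J_{*}$ is nonlinear in any non-Hilbert space, there is no reason for this map to be convex, and in general it is not. The absorption trick works in \cite{Zh} precisely because there the bifunctions live on $\Omega\times\Omega$ and the $A$-term $\langle y-x,Ax\rangle$ is \emph{affine} in $y$; here the $f_i$ live on $J\Omega\times J\Omega$, so the absorption must be carried out in dual variables, where the $A$-term loses convexity. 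Without (A4) for $G$ you cannot invoke Lemma \ref{ww3}, so $T_r(x)\neq\emptyset$ is never established (which (i) tacitly presupposes), and the Minty argument for (v), which uses convexity of $G(y^{*}_t,\cdot)$ along the segment, collapses as well. Note also that even (A2) for $G$ needs $A$ monotone, and the lemma as stated imposes \emph{no} hypotheses on $A$ or $\varphi$ at all; your proof correctly identifies the needed hypotheses, but at the stated generality the route you propose cannot be completed (it does go through when $\mathcal{X}$ is Hilbert, where $J=J_{*}=I$ and the $A$-term is affine).

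There is also a concrete error in (ii). With the penalty actually written in the lemma, $\frac{1}{r}\langle y-z,Jz-Jx\rangle$, adding the two defining inequalities and using (A2) yields $\langle z_1-z_2,Jz_1-Jz_2\rangle\le\langle z_1-z_2,Jx_1-Jx_2\rangle$ (outputs paired against $Jx_1-Jx_2$), \emph{not} the stated $\langle z_1-z_2,Jz_1-Jz_2\rangle\le\langle x_1-x_2,Jz_1-Jz_2\rangle$. The stated (ii) follows instead from the other penalty, $\frac{1}{r}\langle z-x,Jy-Jz\rangle$, which is the form appearing in Lemma \ref{ww3} and in the algorithm of Theorem \ref{main}; the two forms coincide in Hilbert space but differ in a general Banach space, and indeed (ii) and (iv) of the lemma cannot both be derived from the same penalty --- an inconsistency in the statement itself, but one a blind proof must detect rather than paper over by asserting the sum ``rearranges precisely'' to (ii). (Your (i) and (iv) do go through with the stated penalty.) Finally, a small repair in (v): the image $J_{*}(JGMEP)$ of a closed set under a continuous map need not be closed, and $J_*$ is only demicontinuous here anyway; argue instead that $GMEP=J^{-1}(JGMEP)$, that $J$ is norm-to-weak$^*$ continuous, and that the closed convex set $JGMEP$ is weakly closed.
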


\begin{lemma}\cite{uoo}\label{lemma1}
Let $\mathcal{X}$ be a uniformly convex and uniformly smooth  real Banach space with dual space $\mathcal{X}^*$ and let $\Omega$ be a closed subset of $\mathcal{X}$ such that $J\Omega$ is closed and convex. Let $T$ be a
generalized $J_{*}-$nonexpansive map from $\Omega$ to $\mathcal{X}^*$ such that $F_{J}(T) \neq \emptyset$, then $F_{J}(T)$ and $JF_{J}(T)$ are closed.
\end{lemma}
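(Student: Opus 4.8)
The plan is to reduce everything to the ordinary fixed-point set of the single-valued self-map $S := J_{*}\circ T : \Omega \to \mathcal{X}$. Since $x^{*}\in F_{J}(T)$ means $Tx^{*}=Jx^{*}$, applying $J_{*}$ gives $Sx^{*}=J_{*}Jx^{*}=x^{*}$; conversely $Sx^{*}=x^{*}$ yields $Tx^{*}=Jx^{*}$. Hence $F_{J}(T)=F(S)$, and the generalized $J_{*}$-nonexpansive hypothesis reads $\phi(p,Sx)\le\phi(p,x)$ for every $x\in\Omega$ and every $p\in F_{J}(T)$. This reformulation is the crux: it lets me treat $F_{J}(T)$ as a fixed-point set controlled by the Lyapunov functional.

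First I would prove that $F_{J}(T)$ is closed. Take any sequence $\{x_{n}\}\subset F_{J}(T)$ with $x_{n}\to x$ in $\mathcal{X}$; since $\Omega$ is closed, $x\in\Omega$. The key move is to apply the nonexpansive inequality with the running fixed point $p=x_{n}$ evaluated at the fixed target $x$, giving $\phi(x_{n},Sx)\le\phi(x_{n},x)$ for every $n$. Now pass to the limit. Because $x_{n}\to x$ strongly, $\|x_{n}\|\to\|x\|$ and $\langle x_{n},J(Sx)\rangle\to\langle x,J(Sx)\rangle$ by continuity of the bounded linear functional $\langle\,\cdot\,,J(Sx)\rangle$, so the left-hand side converges to $\phi(x,Sx)$; on the right-hand side $\langle x_{n},Jx\rangle\to\langle x,Jx\rangle=\|x\|^{2}$, whence $\phi(x_{n},x)\to\|x\|^{2}-2\|x\|^{2}+\|x\|^{2}=0$. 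Therefore $\phi(x,Sx)\le0$, and since $\phi(x,Sx)\ge(\|x\|-\|Sx\|)^{2}\ge0$ by \eqref{fi}, we conclude $\phi(x,Sx)=0$. Applying Lemma \ref{man} to the bounded constant sequences $x$ and $Sx$ forces $\|x-Sx\|=0$, i.e. $Sx=x$, so $x\in F_{J}(T)$ and $F_{J}(T)$ is closed.

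For the second assertion I would exploit the continuity of the duality maps afforded by the uniform convexity and uniform smoothness of $\mathcal{X}$: in this setting $J^{-1}=J_{*}$ exists and is norm-to-norm continuous, with $JJ_{*}=I_{\mathcal{X}^{*}}$. Let $\{Jx_{n}\}$, with $x_{n}\in F_{J}(T)$, converge to some $y^{*}\in\mathcal{X}^{*}$. Then $x_{n}=J_{*}Jx_{n}\to J_{*}y^{*}=:x$, so $x\in\Omega$ because $\Omega$ is closed, and by the closedness of $F_{J}(T)$ just established $x\in F_{J}(T)$. Since $JJ_{*}=I_{\mathcal{X}^{*}}$ we get $y^{*}=Jx\in JF_{J}(T)$, proving that $JF_{J}(T)$ is closed.

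The argument is essentially mechanical once the reduction to $S$ is in place; the only delicate point is justifying the two limits in the displayed inequality, which is precisely where the strong convergence of $\{x_{n}\}$ is used. I do not anticipate a serious obstacle, but I would be careful to invoke \eqref{fi} (rather than strict convexity directly) to upgrade $\phi(x,Sx)\le0$ to $\phi(x,Sx)=0$, and to invoke Lemma \ref{man} to pass from the vanishing of $\phi$ to equality of the two points.
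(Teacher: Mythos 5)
Your proof is correct. The paper does not actually prove this lemma---it is imported verbatim from \cite{uoo}---but your argument (identifying $F_{J}(T)$ with the fixed-point set of $S=J_{*}\circ T$, applying $\phi(x_{n},Sx)\le\phi(x_{n},x)$ with the running fixed points $x_{n}$ and passing to the limit to get $\phi(x,Sx)=0$ and hence $Sx=x$ via \eqref{fi} and Lemma \ref{man}, then using norm-to-norm continuity of $J_{*}=J^{-1}$ to transfer closedness of $F_{J}(T)$ to $JF_{J}(T)$) is exactly the standard proof used in that reference, so there is nothing to flag.
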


\begin{lemma}\cite{uoo} \label{lemma2}
Let $\mathcal{X}$ be a uniformly smooth and uniformly convex real Banach space and let $\Omega$ be a closed subset of $\mathcal{X}$ such that $J\Omega$ is closed and convex. Let $T$ be a
generalized $J_{*}-$nonexpansive map from $\Omega$ to $\mathcal{X}^*$ such that $F_{J}(T) \neq \emptyset$. If $JF_{J}(T)$ is convex, then $F_{J}(T)$ is a sunny generalized nonexpansive retract of $\mathcal{X}$.
\end{lemma}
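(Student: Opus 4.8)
The plan is to recognize that this lemma is essentially a verification-of-hypotheses argument feeding into the characterization of sunny generalized nonexpansive retracts recorded in Lemma \ref{lem1}. I would first pin down the ambient geometry: since $\mathcal{X}$ is both uniformly smooth and uniformly convex, it is in particular smooth, strictly convex and reflexive (uniform convexity gives reflexivity via Milman--Pettis, uniform smoothness gives smoothness, uniform convexity gives strict convexity). Consequently $J$ is a bijection with $J^{-1}=J_*$, and both Lemma \ref{lem1} and Lemma \ref{lemma1} are applicable in this setting.

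Next I would assemble the structural facts about $F_J(T)$. Because $F_J(T)\neq\emptyset$, $\Omega$ is closed with $J\Omega$ closed and convex, and $T$ is generalized $J_*$-nonexpansive, Lemma \ref{lemma1} applies and yields that both $F_J(T)$ and $JF_J(T)$ are closed. I would then combine this with the standing hypothesis that $JF_J(T)$ is convex, so that $JF_J(T)$ is closed and convex while $F_J(T)$ is a nonempty closed subset of $\mathcal{X}$. These are precisely the ingredients that the characterization theorem consumes. Invoking the implication $(iii)\Rightarrow(i)$ of Lemma \ref{lem1} with the set taken to be $F_J(T)$ then delivers the conclusion that $F_J(T)$ is a sunny generalized nonexpansive retract of $\mathcal{X}$; if an explicit retraction is wanted, one can exhibit it as $R=J^{-1}\,\Pi_{JF_J(T)}\,J$, where $\Pi_{JF_J(T)}$ is the metric projection of $\mathcal{X}^*$ onto the closed convex set $JF_J(T)$.

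The main obstacle, and the point that requires care, is matching the hypotheses of the characterization exactly. Convexity of $JF_J(T)$ does \emph{not} imply convexity of $F_J(T)$, because $J$ is a nonlinear homeomorphism on a general Banach space; only in the Hilbert-space case $J=I$ do the two notions coincide. Hence the argument cannot rely on $F_J(T)$ being convex, and one must use the form of the characterization that assumes only closedness of the underlying set together with closedness and convexity of its image under $J$ — this is exactly the content that powers $(iii)\Rightarrow(i)$. Thus the only genuine work beyond citing Lemma \ref{lem1} is confirming that Lemma \ref{lemma1} legitimately supplies the closedness of both $F_J(T)$ and $JF_J(T)$, and that uniform smoothness together with uniform convexity supplies the smooth, strictly convex, reflexive hypotheses on $\mathcal{X}$; everything else is a direct appeal to the characterization.
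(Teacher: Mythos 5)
The paper itself gives no proof of this lemma (it is quoted verbatim from \cite{uoo}), but your argument is exactly the intended one: Lemma \ref{lemma1} supplies closedness of $F_J(T)$ and $JF_J(T)$, the hypothesis supplies convexity of $JF_J(T)$, and the implication $(iii)\Rightarrow(i)$ of Lemma \ref{lem1} --- which in its correct form (Ibaraki--Takahashi/Kohsaka--Takahashi) requires only closedness of the underlying set, a discrepancy with the paper's stated hypotheses that you rightly identify and handle --- gives the conclusion. The only blemish is in your optional closing remark: the explicit retraction is $R=J^{-1}\,\Pi_{JF_J(T)}\,J$ with $\Pi_{JF_J(T)}$ the \emph{generalized} projection (in the sense of Alber) of $\mathcal{X}^*$ onto $JF_J(T)$, not the metric projection, though nothing in your main argument depends on this.
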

\begin{example}
Let $\mathcal{X}$ be a uniformly smooth and uniformly convex real Banach space  with dual space $\mathcal{X}^*$ and let $\Omega$ be a nonempty closed subset of $\mathcal{X}$. Let $T:\Omega\rightarrow \mathcal{X}^*$, be a generalized $J_{*}-$nonexpansive maps such that $F_{J}(T)\neq \emptyset$. Let $\alpha_n  \subset (0,1)$ such that $1 - \alpha_n \ge \frac{1}{2}$. For all $n \in \mathbb{N}$, define $T_{n}:\Omega\rightarrow \mathcal{X}^*$ by 
\begin{equation}
    T_{n}u = \alpha_nJu + (1 - \alpha_n)Tu, ~\forall~ u\in \Omega.
\end{equation}
Then $\{T_n\}$ is a countable family of generalized $J_{*}-$nonexpansive maps satisfying $NST-$condition with $T$.
\end{example}
\begin{proof}
Clearly $F_{J}(T_{n})=F_{J}(T)$ $\forall~n \in \mathbb{N}$. Hence, $\cap_{n=1}^{\infty}F_{J}(T_{n})=F_{J}(T)$. For $u \in \Omega$, $v \in F_{J}(T_{n})$, 
\begin{eqnarray*}
    \phi(v, J_{*}oT_{n}u)& = &\phi(v, J_{*}(\alpha_nJu + (1 - \alpha_n)Tu))   \nonumber\\
                        & = & ||v||^2 -2\langle v, (\alpha_nJu + (1 - \alpha_n)J(J_{*}oT)u)\rangle + ||\alpha_nJu + (1 - \alpha_n)J(J_{*}oT)u||^2 \nonumber\\
                        & = & ||v||^2 -2\alpha_n\langle v, Ju\rangle -2(1 - \alpha_n)\langle v, J(J_{*}oT)u)\rangle + ||\alpha_nJu + (1 - \alpha_n)J(J_{*}oT)u||^2 \nonumber\\
                        & \le & ||v||^2 -2\alpha_n\langle v, Ju\rangle -2(1 - \alpha_n)\langle v, J(J_{*}oT)u)\rangle + \alpha_n||u||^2 + (1 - \alpha_n)||J_{*}oTu||^2 \nonumber\\
                        & = & \alpha_n||v||^2 -2\alpha_n\langle v, Ju\rangle + \alpha_n||u||^2 + (1 - \alpha_n)||v||^2 \nonumber\\
                        &&-2(1 - \alpha_n)\langle v, J(J_{*}oT)u)\rangle + (1 - \alpha_n)||J_{*}oTu||^2 \nonumber\\
                         & = & \alpha_n\phi(v, u) + (1 - \alpha_n)\phi(v, J_{*}oTu) \nonumber\\
                        & \le & \phi(v, u).
\end{eqnarray*}
Hence, $\{T_n\}$ is a countable family of generalized $J_{*}-$nonexpansive maps.\\

\noindent Let $\{u_n\}$ be a bounded sequence in $\Omega$ such that $\lim ||Ju_n - T_nu_n|| = 0$. This implies that $\{J_{*}oTu_n\}$ is bounded. From the definition of $T_n$, we obtain the following inequality
\begin{equation}
   ||Ju_n - Tu_n|| = \frac{1}{(1 - \alpha_n)} ||Ju_n - T_nu_n|| \le 2||Ju_n - T_nu_n||.
\end{equation}
This shows that $\lim ||Ju_n - Tu_n|| = 0$.
\end{proof}
\section{Main Results}
\noindent We now prove the following theorem.
\begin{theorem}\label{main}
Let $\mathcal{X}$ be a uniformly smooth and uniformly convex real Banach space  with dual space $\mathcal{X}^*$ and let $\Omega$ be a nonempty closed and convex subset of $\mathcal{X}$ 
such that $J\Omega$ is closed and convex. Let $\varphi:J\Omega \rightarrow \mathbb{R}$ be a lower semi-continuous and convex function. For each $i \in \{1, 2, 3, ..., N\}$, {\bl let $f_{i}$ be a bifunction} from $J\Omega\times J\Omega$ to $\mathbb{R}$ satisfying $(A1)-(A4)$, 
$T^i_{n}:\Omega\rightarrow \mathcal{X}^*, n=1, 2, 3, ...$ be an infinite family of generalized $J_{*}-$nonexpansive maps and $\Gamma$ be a family of 
closed and generalized $J_{*}-$nonexpansive maps from $\Omega$ to $\mathcal{X}^*$ such that $\cap_{n=1}^{\infty}F_{J}(T^i_{n})=F_{J}(\Gamma) \neq \emptyset$
and $B := F_{J}(\Gamma)\cap GMEP(f,A, \varphi) \neq \emptyset.$ Assume that $JF_{J}(\Gamma)$ is convex and 
$\{T^i_{n}\}$ satisfies the NST-condition with $\Gamma$.
Let $\{x_{n}\}$ be generated by:  

\begin{equation}\label{algcoro1}
\begin{cases}  & x_{1} = x\in \Omega; \Omega_{1}=\Omega, \cr
                     & y_n =J^{-1}(\alpha^0_{n}Jx_{n} + \sum_{i = 1} ^{N} \alpha^i_{n}T^i_{n}x_{n}),\cr
                    & u_{n}\in \Omega, ~~such~~that~~ \sum_{i = 1} ^{N} f_i(Ju_n,Jy)+\varphi(Jy) - \varphi (Ju_n) 
                    \cr & + \langle y - u_n, A u_n \rangle + \frac{1}{r_n}\langle u_n-y_{n}, Jy-Ju_n\rangle\geq0,~~\forall~~y\in \Omega,\cr
                    & \Omega_{n+1} =\{z\in \Omega_{n} : \phi(z, u_{n}) \leq \phi(z, x_{n})\},\cr
                     & x_{n+1} = R_{\Omega_{n+1}}x,
\end{cases} 
\end{equation}

for all $n\in \mathbb{N}, \; \{\alpha^i_{n}\}\in [0,1]$ such that $\sum_{i = 0} ^{N} \alpha^i_{n} = 1$,  $\{r_n\}\subset [a,\infty)$
 for some $a>0$. Then, $\{x_{n}\}$ converges strongly 
to $R_B x$, where $R_B$ is the sunny generalized $J_*-$nonexpansive retraction of $\mathcal{X}$ onto $B$.   
\end{theorem}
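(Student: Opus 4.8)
The plan is to run the argument of the hybrid (shrinking) projection method, reading $u_n$ as $T_{r_n}y_n$ in the notation of Lemma~\ref{ww4}. First I would check by induction that the scheme is well defined and that $B\subseteq\Omega_n$ for every $n$. Each $\Omega_{n+1}$ is cut out of $\Omega_n$ by $\phi(z,u_n)\le\phi(z,x_n)$, which after expanding \eqref{Lya} becomes the affine inequality $\|u_n\|^2-\|x_n\|^2-2\langle z,Ju_n-Jx_n\rangle\le 0$ in the variable $Jz$; hence $J\Omega_{n+1}$ is the intersection of the closed convex set $J\Omega_n$ with a closed half-space of $\mathcal{X}^*$, so it is again closed and convex, and by Lemma~\ref{lem1} the retraction $R_{\Omega_{n+1}}$ exists and is unique by Lemma~\ref{ww1}. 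For the inclusion, fix $p\in B$; since $p\in F_J(\Gamma)$ and each $T^i_n$ is generalized $J_*$-nonexpansive, convexity of $\|\cdot\|^2$ together with the computation in the Example gives $\phi(p,y_n)\le\phi(p,x_n)$, and Lemma~\ref{ww4}(iv) yields $\phi(p,u_n)\le\phi(p,y_n)\le\phi(p,x_n)$, i.e. $p\in\Omega_{n+1}$.

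Next I would establish strong convergence of $\{x_n\}$. Since $x_n=R_{\Omega_n}x$, $\Omega_{n+1}\subseteq\Omega_n$ and $B\subseteq\Omega_n$, Lemma~\ref{lem2}(ii) applied with $z=x_{n+1}$ and with $z=p\in B$ shows that $\{\phi(x,x_n)\}$ is nondecreasing and bounded above by $\phi(x,p)$, hence convergent, and that $\{x_n\}$ is bounded by \eqref{fi}. For $m\ge n$ one has $x_m\in\Omega_m\subseteq\Omega_n$, so Lemma~\ref{lem2}(ii) gives $\phi(x_n,x_m)\le\phi(x,x_m)-\phi(x,x_n)\to0$; by Lemma~\ref{man}, $\{x_n\}$ is Cauchy and converges strongly to some $w\in\Omega$. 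Since $x_{n+1}\in\Omega_{n+1}$ forces $\phi(x_{n+1},u_n)\le\phi(x_{n+1},x_n)\to0$, Lemma~\ref{man} also gives $\|x_{n+1}-u_n\|\to0$, whence $u_n\to w$. Finally, using continuity of $\phi(p,\cdot)$ and the squeeze $\phi(p,u_n)\le\phi(p,y_n)\le\phi(p,x_n)$ (both ends tending to $\phi(p,w)$) together with $\phi(u_n,y_n)\le\phi(p,y_n)-\phi(p,u_n)$ from Lemma~\ref{ww4}(iv), I obtain $\|u_n-y_n\|\to0$, so $y_n\to w$ as well.

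It remains to prove $w\in B$, which is the crux. For the equilibrium part I would pass to the limit in the inequality defining $u_n=T_{r_n}y_n$: since $r_n\ge a>0$ and $\|Ju_n-Jy_n\|\to0$ by uniform continuity of $J$ on bounded sets, the term $\frac{1}{r_n}\langle y-u_n,Ju_n-Jy_n\rangle\to0$, and the standard Blum--Oettli limiting argument using (A1)--(A4), monotonicity of $\sum_i f_i$, lower semicontinuity of $\varphi$, and $u_n\to w$ yields $w\in GMEP(f,A,\varphi)$; handling the sum of bifunctions and the nonlinear term $\langle y-u_n,Au_n\rangle$ in this passage to the limit is the most delicate point. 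For the fixed-point part I would sharpen the estimate of $\phi(p,y_n)$: the points $Jx_n, T^1_nx_n,\dots,T^N_nx_n$ lie in a fixed ball (boundedness of $\{x_n\}$ and generalized $J_*$-nonexpansiveness), so applying Lemma~\ref{zhang} to $Jy_n=\alpha^0_nJx_n+\sum_{i=1}^N\alpha^i_nT^i_nx_n$ with the pair $(0,i)$ produces
\[
\phi(p,y_n)\le\phi(p,x_n)-\alpha^0_n\alpha^i_n\,g\big(\|Jx_n-T^i_nx_n\|\big).
\]
Thus $\alpha^0_n\alpha^i_n\,g(\|Jx_n-T^i_nx_n\|)\le\phi(p,x_n)-\phi(p,y_n)\to0$; under the (implicit) requirement $\liminf_n\alpha^0_n\alpha^i_n>0$ and the properties of $g$, this forces $\|Jx_n-T^i_nx_n\|\to0$ for each $i$. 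The NST-condition then gives $\|Jx_n-Tx_n\|\to0$ for every $T\in\Gamma$; since $x_n\to w$ we have $Jx_n\to Jw$ and hence $Tx_n\to Jw$, so $(J_{*}oT)x_n\to w$, and $J_{*}$-closedness of $T$ forces $(J_{*}oT)w=w$, i.e. $Tw=Jw$. Hence $w\in F_J(\Gamma)$ and therefore $w\in B$.

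To conclude, I would identify the limit. Because $J$ is injective in this setting, $JB=JF_J(\Gamma)\cap JGMEP(f,A,\varphi)$ is an intersection of closed convex sets by the hypothesis that $JF_J(\Gamma)$ is convex and by Lemma~\ref{ww4}(v), hence closed and convex, so by Lemma~\ref{lem1} the retraction $R_B$ exists. From $x_n=R_{\Omega_n}x$ and $B\subseteq\Omega_n$, Lemma~\ref{lem2}(i) gives $\langle x-x_n,Jy-Jx_n\rangle\le0$ for all $y\in B$; letting $n\to\infty$ and using continuity of $J$ yields $\langle x-w,Jy-Jw\rangle\le0$ for all $y\in B$, which by Lemma~\ref{lem2}(i) characterizes $w=R_Bx$. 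This completes the argument; I expect the equilibrium-limit step to be the main obstacle, with the correct $\liminf$ hypothesis on the coefficients $\alpha^i_n$ being the other point needing care.
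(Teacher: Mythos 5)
Your proposal reproduces the paper's own proof almost step for step: the induction $B\subseteq\Omega_n$ via Lemma \ref{zhang} and Lemma \ref{ww4}(iv), the convergence of $\phi(x,x_n)$ and the Cauchy estimate $\phi(x_n,x_m)\le\phi(x,x_m)-\phi(x,x_n)$ via Lemma \ref{lem2}(ii) and Lemma \ref{man}, the limits $\|x_n-u_n\|\to 0$ and $\|y_n-u_n\|\to 0$, the Blum--Oettli passage to the limit for the equilibrium part, and the NST-plus-closedness argument for the fixed-point part. Your two local deviations are sound and, if anything, improvements: you identify the limit through the variational characterization in Lemma \ref{lem2}(i) (the paper instead squeezes $\phi(x,x^*)=\phi(x,R_Bx)$ by applying Lemma \ref{lem2}(ii) twice), and you deduce $(J_*\circ T)x_n\to w$ from $Tx_n\to Jw$ and continuity of $J_*$ before invoking closedness (the paper simply assumes that $(J_*\circ T^i)x_n$ converges). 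Your remark that the theorem implicitly needs $\liminf_n\alpha^0_n\alpha^i_n>0$ is also correct: this hypothesis is absent from the statement, yet the paper's Step V asserts it silently, so you caught a real omission.

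The one genuine flaw is in your well-definedness argument. You claim that $\phi(z,u_n)\le\phi(z,x_n)$, expanded as $\|u_n\|^2-\|x_n\|^2-2\langle z,Ju_n-Jx_n\rangle\le 0$, is affine \emph{in $Jz$}, so that $J\Omega_{n+1}$ is the intersection of $J\Omega_n$ with a closed half-space of $\mathcal{X}^*$. It is not: the pairing $\langle z,Ju_n-Jx_n\rangle$ is linear in $z$, so this cut is a closed half-space of $\mathcal{X}$, and what follows is that $\Omega_{n+1}$ itself is closed and convex. That is not what Lemma \ref{lem1} needs: the sunny generalized nonexpansive retraction $R_{\Omega_{n+1}}$ exists precisely when $J\Omega_{n+1}$ is closed and convex, and the image of a convex set under the nonlinear map $J$ need not be convex; written in the dual variable $z^*=Jz$, your cut reads $\langle J_*z^*,Ju_n-Jx_n\rangle\ge\tfrac{1}{2}\big(\|u_n\|^2-\|x_n\|^2\big)$, which is not a half-space condition in $z^*$. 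You have in effect transplanted the argument that works for Theorem \ref{main2}, whose cut $\phi(u_n,z)\le\phi(x_n,z)$ is genuinely affine in $Jz$. In fairness, this is exactly the point the paper itself dismisses with ``It is easy to see that $J\Omega_n$ is closed and convex,'' so the gap is inherited rather than introduced; but as written your half-space argument does not establish the convexity of $J\Omega_{n+1}$ that Lemma \ref{lem1} and the step $x_{n+1}=R_{\Omega_{n+1}}x$ require, and this point needs either a different argument or a reformulation of $\Omega_{n+1}$ as in Theorem \ref{main2}.
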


\noindent {\bf Sketch of proof:}
We show that 
\begin{itemize}
    \item [\bf I] $\{x_{n}\}$ is well defined;
    \item [\bf II] $F_{J}(\Gamma)\cap GMEP(f,A, \varphi) \subset \Omega_{n} \text{ for all} ~ n \ge 1$;
    \item [\bf III] {\bl $R_B x$ exists} as a point in $\Omega_n\; \text{for all} ~ n \ge 1$;
    \item [\bf IV] $x_n \to x^*$ for some $x^* \in \Omega$;
    \item [\bf V] $x^*\in F_{J}(\Gamma)\cap GMEP(f,A, \varphi)$;
    \item [\bf VI] $x^* = R_{B}x$.
\end{itemize}

\begin{proof}
 The proof is given in $6$ steps.
 
\noindent {\bf (I)} It is easy to see that $J\Omega_n$ is closed and convex for each $n\geq1$. Therefore, from 
Lemma \ref{lem1}, we have that $\Omega_n$ is a sunny generalized $J_*-$nonexpansive retract of $\mathcal{X}$ for each $n\geq1$. Hence, $\{x_{n}\}$ is well defined.\\

\noindent {\bf (II)} 
Clearly, $B \subset \Omega_1$. Suppose  $B \subset \Omega_{n}$ for some $n\in \mathbb{N}$. Let $u\in B$, and $ u_n=T_{r_n}y_n$ for all $n\in \mathbb{N}$. Using the fact that {\bl$\{T^i_{n}\}$}
is an infinite family of generalized $J_*-$nonexpansive maps, the definition of $y_n$, Lemmas \ref{ww4}, and  \ref{zhang}, we compute as follows:
\begin{eqnarray}\label{222}
 \phi(u, u_{n}) & = & \phi(u,T_{r_n}y_n)\leq\phi(u,y_n)  = \phi{\bl\Big(}u, J^{-1}{\bl \Big(}\alpha^0_{n}Jx_{n} + \sum_{i = 1} ^{N}                         \alpha^i_{n}T^i_{n}x_{n}{\bl\Big)\Big)}\nonumber\\
                & = & ||u||^2 -2{\bl\Big\langle} u, \alpha^0_{n}Jx_{n} + \sum_{i = 1} ^{N}                         \alpha^i_{n}T^i_{n}x_{n}{\bl\Big\rangle} + {\bl \Big|\Big|}\alpha^0_{n}Jx_{n} + \sum_{i = 1} ^{N}                         \alpha^i_{n}T^i_{n}x_{n}{\bl\Big|\Big|}^2 \nonumber\\
                & \leq & ||u||^2 -2\alpha^0_{n}\langle u, Jx_{n}\rangle + \alpha^0_{n}||x_{n}||^2 -2{\bl\Big\langle} u, \sum_{i = 1} ^{N} \alpha^i_{n}T^i_{n}x_{n}{\bl\Big\rangle}  + \sum_{i = 1} ^{N} \alpha^i_{n}||T^i_{n}x_{n}||^2 \nonumber\\
                && -  \alpha^0_{n}\alpha^i_{n}g(||Jx_{n}- T^i_{n}x_{n}||)\nonumber\\
                & = & \alpha^0_{n}(||u||^2 -2\langle u, Jx_{n}\rangle + ||x_{n}||^2) + \sum_{i = 1} ^{N}\alpha^i_{n}(||u||^2 -2\langle u, J(J_{*}oT^i_{n})x_{n}\rangle  + ||T^i_{n}x_{n}||^2) \nonumber\\ 
                && -  \alpha^0_{n}\alpha^i_{n}g(||Jx_{n}- T^i_{n}x_{n}||)\nonumber\\
                & = & \alpha^0_{n}\phi(u, x_{n}) + \sum_{i = 1} ^{N}\alpha^i_{n}\phi(u, J_{*}oT^i_{n}x_{n}) -  \alpha^0_{n}\alpha^i_{n}g(||Jx_{n}- T^i_{n}x_{n}||)\nonumber\\
                & \leq & \alpha^0_{n}\phi(u, x_{n}) + \sum_{i = 1} ^{N}\alpha^i_{n}\phi(u, x_{n}) -  \alpha^0_{n}\alpha^i_{n}g(||Jx_{n}- T^i_{n}x_{n}||).\nonumber
\end{eqnarray}
Hence, we obtain the following inequality for each $i \in \{1,2,3,\cdots, N\}$
\begin{equation}\label{key}
\phi(u, u_{n}) \leq \phi(u, x_{n}) -  \alpha^0_{n}\alpha^i_{n}g(||Jx_{n}- T^i_{n}x_{n}||).
\end{equation}
Additionally, we have that 
\begin{equation}\label{key2}
\phi(u, y_{n}) \leq \phi(u, x_{n}) ~\forall~ n\in \mathbb{N}.
\end{equation}
From inequality (\ref{key}), we conclude that $u\in \Omega_{n+1}$. Thus, $B \subset{\Omega_n} $ for all $n \ge 1$.\\

\noindent{\bf (III)}
From Lemma \ref{ww4} $JGMEP(f,A, \varphi)$ is closed and convex. Also, using our assumption and {\bl Lemma} \ref{lemma1}, we have that  {\bl $J(F_{J}(\Gamma))$} is closed and convex.  
Since $\mathcal{X}$ is uniformly convex, $J$ is one-to-one. Thus, we have that,

\overfullrule = 0mm\hbox to 10pt {
$J\Big(F_{J}(\Gamma)\cap GMEP(f,A, \varphi)\Big)= JF_{J}(\Gamma)\cap JGMEP(f,A, \varphi)$} 

\noindent and so $J(B)$ is closed and convex. Hence, using Lemma \ref{lem1}, we obtain that $B$ is a sunny generalized $J_*-$nonexpansive retract of $\mathcal{X}$. Thus, using {\bl Lemma} \ref{ww1}, we have that $R_Bx$ exists as a point in $\Omega_n$ for all $n \ge 1$. \\

\noindent{\bf (IV)} Using the fact that $x_n=R_{\Omega_n}x$ and {\bl Lemma} \ref{lem2}$(ii)$, we obtain
$$\phi(x,x_{n})=\phi(x,R_{\Omega_{n}}x)\leq \phi(x,u),$$
for all $u\in F_{J}(\Gamma)\cap GMEP(f,A, \varphi)\subset \Omega_{n}.$
This implies that $\{\phi(x,x_{n})\}$ is bounded. Hence, from equation \eqref{fi}, $\{x_{n}\}$ is bounded. Also, since  
 $x_{n+1} = R_{\Omega_{n+1}}x \in \Omega_{n+1} \subset \Omega_n$, and $x_n=R_{\Omega_n}x \in \Omega_n$, applying Lemma \ref{lem2}$(ii)$ gives

$$\phi(x,x_{n})\leq \phi(x,x_{n+1}) ~\forall~ n\in \mathbb{N}.$$ So, $\lim _{n\rightarrow \infty}\phi(x, x_{n})$ exists.
Again, using Lemma \ref{lem2}$(ii)$ and $x_{n}=R_{\Omega_{n}}x$, we obtain that for all $m,n\in \mathbb{N}$ with $m>n$, 
\begin{eqnarray}
\phi(x_{n},x_{m})& =&\phi(R_{\Omega_{n}}x,x_{m})\leq  \phi(x,x_{m}) -\phi(x,R_{\Omega_{n}}x) \nonumber\\
&= & \phi(x,x_{m}) -\phi(x,x_{n}) \rightarrow 0~as~ n\rightarrow \infty.
\end{eqnarray} From Lemma \ref{man}, we conclude that $||x_{n}-x_{m}||\rightarrow 0, ~as~ m, ~ n\rightarrow \infty.$ Hence,
$\{x_{n}\}$ is a Cauchy sequence in $\Omega$, and so, there exists $x^*\in \Omega$ such that $x_{n}\rightarrow x^*$.\\

\noindent{\bf (V)} From the definitions of $\Omega_{n+1}$ 
and $x_{n+1}$, we obtain that $\phi(x_{n+1}, u_{n})\leq \phi(x_{n+1},x_{n})\rightarrow 0$ as $n\rightarrow \infty.$ Hence, by Lemma \ref{man} , we have that
\begin{equation}\label{limun}
\underset{n \to \infty}{\text{\bl lim}}||x_{n}-u_{n}||= 0.    
\end{equation}

 \noindent Since $x_n\rightarrow x^*~as~n\rightarrow \infty$, equation (\ref{limun}) implies that $u_n\rightarrow x^*~as~n\rightarrow \infty$. Observe that since $J$ is uniformly continuous on bounded subsets of $\mathcal{X}$, it follows from (\ref{limun}) that 
 \begin{equation}\label{limjun}
\lim_{n\rightarrow \infty}||Ju_{n}-Jx_{n}||=0.     
\end{equation}

\noindent From inequality (\ref{key}) and the fact that $g$ is nonnegative, we obtain
$$0 \leq \alpha^0_{n}\alpha^i_{n}g(||Jx_{n}- T^i_{n}x_{n}||) \leq \phi(u, x_{n}) - \phi(u, u_{n}) \leq 2||u||.||Jx_{n}-Ju_{n}|| + ||x_{n}-u_{n}||M,$$

\noindent for some $M > 0.$ Let $ \liminf \alpha^0_{n}\alpha^i_{n} = a$. Since $a > 0$, there exists $n_0\in\mathbb{N}$:
$$0<\frac{a}{2}<\alpha^0_{n}\alpha^i_{n}~\textrm{for all}~ n\ge n_0.$$ Thus, $$0 \leq \frac{a}{2}g(||Jx_{n}- T^i_{n}x_{n}||) \leq 2||u||.||Jx_{n}-Ju_{n}|| + ||x_{n}-u_{n}||M ~\textrm{for all}~ n\ge n_0.$$
\noindent Thus, from (\ref{limun}), (\ref{limjun}), and properties of $g$, we obtain that $\lim_{n\rightarrow \infty}||Jx_{n}-T^i_{n}x_{n}||\ = 0$. Since $\{T^i_{n}\}_{n=1}^{\infty}$ satisfies the NST condition with $\Gamma$, we have that
\begin{equation}
 \lim _{n\rightarrow \infty}||Jx_{n}-T^ix_{n}||=0 ~\forall~ T^i\in \Gamma. 
\end{equation}
  
\noindent Now, since we have established that $x_{n}\rightarrow {\bl x^*\in \Omega}$. Assume that  $(J_{*}oT^i)x_{n}\rightarrow y^*$. Since $T^i$ is closed, we have $y^*=(J_{*}oT^i)x^*$.
Furthermore, by the uniform continuity of $J$ on bounded subsets of $\mathcal{X}$, we have:
$Jx_{n} \rightarrow Jx^*$ and $J(J_{*}oT^i)x_{n}\rightarrow Jy^*$ as $n\rightarrow \infty.$ Hence, we have
$$ \lim_{n\rightarrow \infty}||Jx_{n}-J(J_{*}oT^i)x_{n}||  =  \lim_{n\rightarrow \infty}||Jx_{n}-T^ix_{n}||=0,  ~\forall~ T^i\in \Gamma, $$  which implies
$ ||Jx^{*}-Jy^{*}||= ||Jx^{*}-J(J_{*}oT^i)x^*||= ||Jx^*-T^ix^*||=0 .$ So, $x^*\in F_{J}(\Gamma)$ for each $i$.


\noindent Next, let $u_n=T_{r_n}y_n$ for all $n\in \mathbb{N}$. Also, from (\ref{limun}), $u_n\rightarrow x^*~as~n\rightarrow \infty$. 
From Lemma \ref{ww4} and inequality (\ref{key2}), we have
{\bl\begin{eqnarray*}
 \phi(u_{n},y_n)  &=& \phi(T_{r_n}y_n,y_n)\\
                  &\leq&\phi(u,y_n) - \phi(u,T_{r_n}y_n)\nonumber\\
                  &\leq&\phi(u,x_n) - \phi(u,u_n)\nonumber
                   \end{eqnarray*}}
Since {\bl$\lim_{n\rightarrow \infty}(\phi(u,x_n) - \phi(u,u_n))=0,$} we have that {\bl$\lim_{n\rightarrow \infty}\phi(u_n,y_n)=0.$} From Lemma \ref{man},
we have that ${\bl\lim}_{n\rightarrow \infty}||y_n-u_n||=0.$ Again, since $r_n\in [a,\infty)$ and $J$ is uniformly continuous on bounded subsets of $\mathcal{X}$, we have that 
\begin{eqnarray}\label{223}
 {\bl\lim}_{n\rightarrow \infty}\frac{||Jy_n-Ju_n||}{r_n}=0.
\end{eqnarray}
{\bl Let $k\in \mathbb{N}$ and let $f_i:J\Omega\times J\Omega\to \mathbb{R}$ be a bifunction satisfying $(A1)$--$(A4)$ for each\\ $i\in\{1,2,\ldots,k\}$. Define
$F:J\Omega\times J\Omega\to \mathbb{R}$ by
\begin{equation}
    F(u,v)=\sum_{i=1}^{k} f_i(u,v), \qquad \forall\, u,v\in J\Omega.
\end{equation}
Equivalently, for all $x,y\in\Omega$,
\begin{equation}
    F(Jx,Jy)=\sum_{i=1}^{k} f_i(Jx,Jy).
\end{equation}
Since $F$ is a finite sum of bifunctions satisfying $(A1)$--$(A4)$, it also satisfies $(A1)$--$(A4)$.}\\
From $u_n=T_{r_n}y_n$, we have that
$$F(Ju_n,Jy)+\frac{1}{r_n}\langle u_n-y_{n}, Jy-Ju_n\rangle\geq0,~~\forall~~y\in \Omega.$$
By (A2), we have 
\begin{eqnarray}\label{224}
 \frac{1}{r_n}\langle u_n-y_{n}, Jy-Ju_n\rangle\geq-F(Ju_n,Jy)\geq F(Jy,Ju_n),~~\forall~~y\in \Omega.
\end{eqnarray}
Since $F(x,\cdot)$ is convex and lower semicontinuous and $u_n\rightarrow x^*$, it follows from equation (\ref{223}) and inequality (\ref{224}) that
$$F(Jy,Jx^*)\leq0,~~\forall~~y\in \Omega.$$
For $t\in(0,1]$ and $y\in \Omega$, let $y^*_t=tJy+(1-t)Jx^*$. Since, $J\Omega$ is convex, we have that $y^*_t\in J\Omega$ and hence $F(y^*_t,Jx^*)\leq0$. 
From (A1),
$$0=F(y^*_t,y^*_t)\leq tF(y^*_t,Jy)+(1-t)F(y^*_t,Jx^*)\leq tF(y^*_t,Jy),~~\forall~~y\in \Omega.$$ This implies that
$$F(y^*_t,Jy)\geq 0,~~\forall~~y\in \Omega.$$
Letting  $t\downarrow0$, from (A3), 
$$F(Jx^*,Jy)\geq 0,~~\forall~~y\in \Omega.$$
Therefore, we have that $Jx^*\in JGMEP(f).$ This implies that $x^*\in GMEP(f).$

\noindent{\bf (VI)} Finally, we show that $x^* = R_{B}x.$\\
From Lemma \ref{lem2}$(ii)$, we obtain that
\begin{equation}\label{eq4.3}
\phi(x,R_Bx) \leq \phi(x,x^*) - \phi(R_Bx, x^*) \leq  \phi(x,x^*).
\end{equation}

\noindent Again, using Lemma \ref{lem2}$(ii)$, definition of $x_{n+1}$, and $ x^*\in B \subset \Omega_{n},$ we compute as follows:
\begin{eqnarray}
 \phi(x,x_{n+1}) &\leq &\phi(x,x_{n+1}) + \phi(x_{n+1},R_Bx)\nonumber\\
                  & = & \phi( x, R_{\Omega_{n+1}}x) +  \phi(R_{\Omega_{n+1}}x, R_Bx) \leq \phi(x, R_Bx).\nonumber
 \end{eqnarray}
Since $x_{n}\rightarrow x^*$, taking limits on both sides of the last inequality, we obtain
\begin{equation}\label{eq4.4}
\phi(x,x^*) \leq \phi(x, R_Bx).
\end{equation}
Using inequalities (\ref{eq4.3}) and (\ref{eq4.4}), we obtain that $\phi(x,x^*) = \phi(x, R_Bx)$. By the 
uniqueness of $R_B$ ({\bl Lemma} \ref{ww1}), we obtain that $x^*=R_Bx$. This completes proof of the theorem. 
\end{proof}
 

\begin{theorem}\label{main2}
Let $\mathcal{X}$ be a uniformly smooth and uniformly convex real Banach space  with dual space $\mathcal{X}^*$ and let $\Omega$ be a nonempty closed and convex subset of $\mathcal{X}$ 
such that $J\Omega$ is closed and convex. Let $\varphi:J\Omega \rightarrow \mathbb{R}$ be a lower semi-continuous and convex function. For each $i \in \{1, 2, 3, ..., N\}$, {\bl let $f_{i}$ be a bifunction} from $J\Omega\times J\Omega$ to $\mathbb{R}$ satisfying $(A1)-(A4)$, 
$T^i_{n}:\Omega\rightarrow \mathcal{X}^*, n=1, 2, 3, ...$ be an infinite family of generalized $J-$nonexpansive maps and $\Gamma$ be a family of closed and generalized $J-$nonexpansive maps from $\Omega$ to $\mathcal{X}^*$ such that $\cap_{n=1}^{\infty}F_{J}(T^i_{n})=F_{J}(\Gamma) \neq \emptyset$
and $B := F_{J}(\Gamma)\cap GMEP(f,A, \varphi) \neq \emptyset.$ Assume that $\{T_{n}\}$ satisfies the NST-condition with $\Gamma$.
Let $\{x_{n}\}$ be generated by:  

\begin{equation}\label{algcoro2}
\begin{cases}  & x_{1} = x\in \Omega; \Omega_{1}=\Omega, \cr
                     & y_n =\alpha^0_{n}x_{n} + \sum_{i = 1} ^{N} \alpha^i_{n}J^{-1}oT^i_{n}x_{n},\cr
                    & u_{n}\in \Omega, ~~such~~that~~ \sum_{i = 1} ^{N} f_i(Ju_n,Jy)+\varphi(Jy) - \varphi (Ju_n) 
                    \cr & + \langle y - u_n, A u_n \rangle + \frac{1}{r_n}\langle u_n-y_{n}, Jy-Ju_n\rangle\geq0,~~\forall~~y\in \Omega,\cr
                    & \Omega_{n+1} =\{z\in \Omega_{n} : \phi(u_{n}, z) \leq \phi(x_{n}, z)\},\cr
                     & x_{n+1} = R_{\Omega_{n+1}}x,
\end{cases} 
\end{equation}

for all $n\in \mathbb{N}, \; \{\alpha^i_{n}\}\in [0,1]$ such that $\sum_{i = 0} ^{N} \alpha^i_{n} = 1$,  $\{r_n\}\subset [a,\infty)$
 for some $a>0$. Then, $\{x_{n}\}$ converges strongly 
to $R_B x$, where $R_B$ is the sunny generalized $J-$nonexpansive retraction of $\mathcal{X}$ onto $B$.   
\end{theorem}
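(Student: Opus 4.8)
The plan is to reproduce the six-step scheme of Theorem \ref{main}, but carrying the Lyapunov functional in the reversed argument order $\phi(\cdot,z)$ throughout. This reversal is forced by the passage from generalized $J_*$-nonexpansive to generalized $J$-nonexpansive maps (for which each $J_*\circ T$ is generalized nonexpansive in the sense of Definition \ref{gen nonexp}) and by the switch from dual-side to primal-side averaging in the definition of $y_n$. It is also precisely what lets us drop the convexity hypothesis on $JF_J(\Gamma)$ that appears in Theorem \ref{main}.

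For \textbf{(I)} I would expand $\phi(u_n,z)\le\phi(x_n,z)$; the $\|z\|^2$ terms cancel, leaving the condition $2\langle x_n-u_n,Jz\rangle\le\|x_n\|^2-\|u_n\|^2$, which is affine in $Jz$. Hence $J\Omega_{n+1}$ is the intersection of $J\Omega_n$ with a half-space of $\mathcal{X}^*$, so each $J\Omega_n$ is closed and convex; by Lemma \ref{lem1}, $\Omega_n$ is a sunny generalized nonexpansive retract of $\mathcal{X}$ and $x_{n+1}=R_{\Omega_{n+1}}x$ is well defined. For \textbf{(II)} I apply Lemma \ref{zhang} to $\|y_n\|^2=\|\alpha^0_nx_n+\sum_i\alpha^i_nJ_*T^i_nx_n\|^2$ to obtain, for $u\in B$, the bound $\phi(y_n,u)\le\alpha^0_n\phi(x_n,u)+\sum_i\alpha^i_n\phi(J_*T^i_nx_n,u)-\alpha^0_n\alpha^i_ng(\|x_n-J_*T^i_nx_n\|)$; the generalized $J$-nonexpansive estimate $\phi(J_*T^i_nx_n,u)\le\phi(x_n,u)$ collapses this to the key inequality $\phi(y_n,u)\le\phi(x_n,u)-\alpha^0_n\alpha^i_ng(\|x_n-J_*T^i_nx_n\|)$. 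The remaining ingredient is the \emph{reversed} resolvent inequality $\phi(u_n,u)=\phi(T_{r_n}y_n,u)\le\phi(y_n,u)$, which I would extract from Lemma \ref{ww4}(ii) rather than from (iv): putting $y=u\in F(T_{r_n})$ in (ii) gives $\langle T_{r_n}y_n-y_n,\,JT_{r_n}y_n-Ju\rangle\le0$, and the elementary identity $\phi(a,b)+\phi(b,c)-\phi(a,c)=2\langle a-b,\,Jc-Jb\rangle$ with $(a,b,c)=(y_n,u_n,u)$ then yields $\phi(u_n,u)\le\phi(y_n,u)-\phi(y_n,u_n)\le\phi(y_n,u)$. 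Combining, $\phi(u_n,u)\le\phi(x_n,u)$, so $u\in\Omega_{n+1}$, and induction gives $B\subset\Omega_n$ for all $n$.

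Step \textbf{(III)} is where the dropped hypothesis is accounted for: $JGMEP(f,A,\varphi)$ is closed and convex by Lemma \ref{ww4}(v), while $JF_J(\Gamma)$ is closed and convex because each $T\in\Gamma$ is generalized $J$-nonexpansive, i.e.\ each $J_*\circ T$ is a generalized nonexpansive map whose fixed-point set has closed convex $J$-image (the $J$-analogue of Lemmas \ref{lemma1}--\ref{lemma2}); since $J$ is injective, $JF_J(\Gamma)=\bigcap_{T\in\Gamma}JF_J(T)$ is then closed and convex. Hence $JB$ is closed and convex and, by Lemmas \ref{lem1} and \ref{ww1}, $B$ is a sunny generalized nonexpansive retract with a unique retraction $R_B$. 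Steps \textbf{(IV)} and \textbf{(VI)} are then identical to Theorem \ref{main}: monotonicity and boundedness of $\{\phi(x,x_n)\}$ together with $\phi(x_n,x_m)\le\phi(x,x_m)-\phi(x,x_n)\to0$ (Lemma \ref{lem2}(ii)) force $\{x_n\}$ to be Cauchy, so $x_n\to x^*$, and the sandwich $\phi(x,R_Bx)\le\phi(x,x^*)\le\phi(x,R_Bx)$ from Lemma \ref{lem2}(ii) gives $x^*=R_Bx$.

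For \textbf{(V)}, $\phi(u_n,x_{n+1})\le\phi(x_n,x_{n+1})\to0$ and Lemma \ref{man} give $\|x_n-u_n\|\to0$ and $u_n\to x^*$. Returning to the key inequality, $\alpha^0_n\alpha^i_ng(\|x_n-J_*T^i_nx_n\|)\le\phi(x_n,u)-\phi(u_n,u)\to0$, so since $\liminf\alpha^0_n\alpha^i_n=a>0$ the properties of $g$ force $\|x_n-(J_*\circ T^i_n)x_n\|\to0$; the NST-condition applied to the $\mathcal{X}$-valued maps $J_*\circ T^i_n$ and the $J_*$-closedness of each $T^i\in\Gamma$ then give $(J_*\circ T^i)x^*=x^*$, i.e.\ $x^*\in F_J(\Gamma)$. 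Membership $x^*\in GMEP(f,A,\varphi)$ follows exactly as in Theorem \ref{main}: Lemma \ref{ww4}(iv) with $p=u$ gives $\phi(y_n,u_n)\le\phi(y_n,u)-\phi(u_n,u)$, which tends to $0$ by the squeeze $\phi(u_n,u)\le\phi(y_n,u)\le\phi(x_n,u)$, so $\|y_n-u_n\|\to0$ and $r_n^{-1}\|Jy_n-Ju_n\|\to0$, after which (A1)--(A4) pass the defining inequality to the limit. The only genuinely non-routine points are the reversed resolvent inequality in (II)---which must be read off Lemma \ref{ww4}(ii) because (iv) supplies the opposite argument order---and the closed-convexity of $JF_J(\Gamma)$ in (III) with no extra hypothesis; the latter is the step I expect to be the main obstacle, since it is exactly where the hypotheses of the two theorems part ways.
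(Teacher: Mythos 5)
Your proposal is correct and follows the same six-step hybrid scheme as the paper; in fact the paper's own proof of Theorem \ref{main2} consists only of step (II) written out with the reversed argument order, followed by the sentence ``the rest of the proof is similar to the proof of Theorem \ref{main}.'' What you do differently is to supply the two details that this ``similar'' glosses over, and both of your additions are sound. First, the reversed resolvent inequality $\phi(T_{r_n}y_n,u)\le\phi(y_n,u)$: the paper cites Lemma \ref{ww4}, but item (iv) of that lemma gives $\phi(p,T_rx)\le\phi(p,x)$, which is the argument order needed in Theorem \ref{main}, not here; your derivation from item (ii) combined with the identity $\phi(a,b)+\phi(b,c)-\phi(a,c)=2\langle a-b,Jc-Jb\rangle$ is exactly what is required, and as a by-product it yields $\phi(y_n,u_n)\le\phi(y_n,u)-\phi(u_n,u)$, which is the inequality you invoke in step (V). (Your attribution of that last inequality to item (iv) is again an order mismatch, but this is harmless since your own step (II) computation already delivers it.) Second, the point you flag as the likely obstacle---closedness and convexity of $JF_J(\Gamma)$ without the hypothesis imposed in Theorem \ref{main}---is indeed the one place where the paper's deferral to Theorem \ref{main} is not literally valid, and your expectation that it holds automatically in the generalized $J$-nonexpansive setting is correct. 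Writing $S=J_*\circ T$, so that $\phi(Sx,p)\le\phi(x,p)$ for all $x\in\Omega$ and $p\in F(S)=F_J(T)$, take $p,q\in F_J(T)$, $t\in[0,1]$, and $z=J_*\bigl(tJp+(1-t)Jq\bigr)\in\Omega$ (here convexity of $J\Omega$ is used); averaging the two inequalities $\phi(Sz,p)\le\phi(z,p)$ and $\phi(Sz,q)\le\phi(z,q)$ with weights $t$ and $1-t$, and using $tJp+(1-t)Jq=Jz$, gives
\begin{equation*}
\|Sz\|^2-2\langle Sz,Jz\rangle\le-\|z\|^2,\qquad\text{i.e.}\qquad \phi(Sz,z)\le0,
\end{equation*}
hence $Sz=z$ by strict convexity, so $Jz=tJp+(1-t)Jq\in JF_J(T)$ and $JF_J(T)$ is convex; closedness of $F_J(T)$ and of $JF_J(T)$ follows from the closedness of the maps in $\Gamma$ together with the uniform continuity of $J$ and $J_*$ on bounded sets, and injectivity of $J$ gives $JF_J(\Gamma)=\bigcap_{T\in\Gamma}JF_J(T)$, so $JB$ is closed and convex and Lemmas \ref{lem1} and \ref{ww1} apply as you say. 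This short computation closes the only point left open in your outline; with it, your argument is complete and is in fact more careful than the paper's at precisely the two spots where the $J$ and $J_*$ cases genuinely differ.
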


\begin{proof}
 \noindent It is easy to see that $\{x_{n}\}$ is well defined.\\

\noindent Clearly, $B \subset \Omega_1$. Suppose  $B \subset \Omega_{n}$ for some $n\in \mathbb{N}$. Let $u\in B$, and $ u_n=T_{r_n}y_n$ for all $n\in \mathbb{N}$. Using the fact that {\bl$\{T^i_{n}\}$}
is an infinite family of generalized $J-$nonexpansive maps, the definition of $y_n$, Lemmas \ref{ww4}, and  \ref{zhang}, we compute as follows:
\begin{eqnarray}
 \phi(u_{n},u) & = & \phi(T_{r_n}y_n,u)\leq\phi(y_n,u)  = \phi{\bl\Big(}\alpha^0_{n}x_{n} + \sum_{i = 1} ^{N}                         \alpha^i_{n}J^{-1}oT^i_{n}x_{n},u{\bl\Big)}\nonumber\\
                & = & {\bl\Big|\Big|}\alpha^0_{n}x_{n} + \sum_{i = 1} ^{N}                         \alpha^i_{n}J^{-1}oT^i_{n}x_{n}{\bl \Big|\Big|}^2 -2{\bl\Big\langle} \alpha^0_{n}x_{n} + \sum_{i = 1} ^{N}                         \alpha^i_{n}J^{-1}oT^i_{n}x_{n},Ju{\bl\Big\rangle} + ||u||^2\nonumber\\
                & \leq & \alpha^0_{n}||x_{n}||^2 -2\alpha^0_{n}\langle x_{n}, Ju\rangle + ||u||^2 + \sum_{i = 1} ^{N} \alpha^i_{n}||J^{-1}oT^i_{n}x_{n}||^2  -2{\bl\Big\langle} \sum_{i = 1} ^{N} \alpha^i_{n}J^{-1}oT^i_{n}x_{n},Ju{\bl\Big\rangle}\nonumber\\
                && -  \alpha^0_{n}\alpha^i_{n}g(||x_{n}- J^{-1}oT^i_{n}x_{n}||)\nonumber\\
                & = & \alpha^0_{n}(||x_{n}||^2 -2\langle x_{n}, Ju\rangle + ||u||^2) + \sum_{i = 1} ^{N}\alpha^i_{n}(||J^{-1}oT^i_{n}x_{n}||^2 -2\langle J^{-1}oT^i_{n}x_{n}, Ju\rangle  + ||u||^2) \nonumber\\ 
                && -  \alpha^0_{n}\alpha^i_{n}g(||x_{n}- J^{-1}oT^i_{n}x_{n}||)\nonumber\\
                & = & \alpha^0_{n}\phi(x_{n},u) + \sum_{i = 1} ^{N}\alpha^i_{n}\phi(J^{-1}oT^i_{n}x_{n},u) -  \alpha^0_{n}\alpha^i_{n}g(||x_{n}- J^{-1}oT^i_{n}x_{n}||)\nonumber\\
                & \leq & \alpha^0_{n}\phi(x_{n},u) + \sum_{i = 1} ^{N}\alpha^i_{n}\phi(x_{n},u) -  \alpha^0_{n}\alpha^i_{n}g(||x_{n}- J^{-1}oT^i_{n}x_{n}||).\nonumber
\end{eqnarray}
Hence, we obtain the following inequality for each $i \in \{1,2,3,\cdots, N\}$
\begin{equation}\label{now}
\phi(u_{n},u) \leq \phi(x_{n},u) -  \alpha^0_{n}\alpha^i_{n}g(||x_{n}- J^{-1}oT^i_{n}x_{n}||).
\end{equation}
Additionally, we have that 
\begin{equation}
\phi(y_{n},u) \leq \phi(x_{n},u) ~\forall~ n\in \mathbb{N}.
\end{equation}
From inequality {\bl(\ref{now})}, we conclude that $u\in \Omega_{n+1}$. Thus, $B \subset{\Omega_n} $ for all $n \ge 1$.\\

\noindent The rest of the proof is similar to the proof of Theorem \ref{main}.

\end{proof}

\begin{example}
Let $\mathcal{X} = l_p$, $1< p <\infty$, $\frac{1}{p} + \frac{1}{q} = 1$, and $\Omega = \overline{B_{l_p}}(0,1)$ = $\{x \in l_p : ||x||_{l_p}\leq 1\}$. Then $J{\bl\Omega} = \overline{B_{l_q}}(0,1)$. Let $f_i : J\Omega\times J\Omega \longrightarrow \mathbb{R}$ defined by $f_i(x^*, y^*) = \langle J^{-1}x^*, y^* - x^*\rangle$ {\bl$\forall$ $x^*,y^* \in J\Omega$} and for each $i \in \{1,2,3,\cdots,k\}$, $A : \Omega \longrightarrow l_q$ defined by ${\bl A}x = J(x_1, x_2, x_3, \cdots)$ $\forall$ $x = (x_1, x_2, x_3, \cdots) \in \Omega$, $\varphi : J\Omega \rightarrow \mathbb{R}$ defined by $\varphi(x^*) = ||x^*||, \forall~ x^* \in J\Omega$, $T : \Omega \longrightarrow l_q$ defined by $Tx = J(0, x_1, x_2, x_3, \cdots)$ $\forall$ $x = (x_1, x_2, x_3, \cdots) \in \Omega$, {\bl $\Gamma = \{T\}$}, and $T_n : \Omega \longrightarrow l_q$ defined by $T_{n}x = \alpha_nJx + (1 - \alpha_n)Tx, ~\forall n \geq 1,~\forall~ x\in \Omega, \alpha_n \in (0,1) \text{ such that } 1 - \alpha_n \ge \frac{1}{2}$. Then $\Omega$, $J\Omega$, $f_i$, $A$, $\varphi$, $T$, and $T_n$ satisfy the conditions of Theorems \ref{main} and \ref{main2}. Moreover, $0 \in F_{J}(\Gamma) \cap GMEP(f,A,\varphi)$.
\end{example}
\noindent Let $A = 0$ in Theorems \ref{main} and \ref{main2}. We obtain the following results.
 \begin{corollary}
  Let $\mathcal{X}, \mathcal{X}^*, \Omega, \varphi, \{f_i\}, \{T^i_{n}\}, \{r_n\}, \{\alpha^i_{n}\}$ be as in Theorem \ref{main2}. If $\cap_{n=1}^{\infty}F_{J}(T^i_{n})=F_{J}(\Gamma) \neq \emptyset$
and $B := F_{J}(\Gamma)\cap MEP(f,A, \varphi) \neq \emptyset$, where $MEP$ is the set of solutions to mixed equilibrium problem \eqref{mep}. Let $\{x_{n}\}$ be generated by:  

\begin{equation}\label{algcoro1}
\begin{cases}  & x_{1} = x\in \Omega; \Omega_{1}=\Omega, \cr
                     & y_n =\alpha^0_{n}x_{n} + \sum_{i = 1} ^{N} \alpha^i_{n}J^{-1}oT^i_{n}x_{n},\cr
                    & u_{n}\in \Omega, ~~such~~that~~ \sum_{i = 1} ^{N} f_i(Ju_n,Jy)+\varphi(Jy) - \varphi (Ju_n) 
                    + \frac{1}{r_n}\langle u_n-y_{n}, Jy-Ju_n\rangle\geq0,~~\forall~~y\in \Omega,\cr
                    & \Omega_{n+1} =\{z\in \Omega_{n} : \phi(z, u_{n}) \leq \phi(z, x_{n})\},\cr
                     & x_{n+1} = R_{\Omega_{n+1}}x,
\end{cases} 
\end{equation}

for all $n\in \mathbb{N}$. Then, $\{x_{n}\}$ converges strongly 
to $R_B x$.   
 \end{corollary}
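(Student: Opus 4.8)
The plan is to recognize that the Corollary is the special case $A=0$ of Theorem~\ref{main2}, so the proof should be a direct application of that theorem rather than a fresh argument. First I would observe that setting $A=0$ in the generalized mixed equilibrium problem \eqref{gmep} collapses the term $\langle Av, u-v\rangle$ entirely, reducing the defining inequality to exactly the mixed equilibrium problem \eqref{mep}; consequently $GMEP(f,A,\varphi)=GMEP(f,0,\varphi)=MEP(f,\varphi)$, and the resolvent operator $T_r$ from Lemma~\ref{ww4} specializes to the operator whose defining inequality is precisely the one appearing in the $u_n$-update of the corollary's algorithm. The crucial point is that all five conclusions of Lemma~\ref{ww4}---single-valuedness, firm nonexpansivity, the fixed-point identity $F(T_r)=MEP(f,\varphi)$, the inequality $\phi(p,T_r x)+\phi(T_r x,x)\le\phi(p,x)$, and closedness/convexity of $JMEP(f,\varphi)$---survive unchanged when $A=0$, since the hypotheses (A1)--(A4) on the $f_i$ and the lower semicontinuity and convexity of $\varphi$ are exactly those retained in the corollary's statement.

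Next I would verify that every structural hypothesis of Theorem~\ref{main2} is inherited. The corollary borrows $\mathcal{X},\mathcal{X}^*,\Omega,\varphi,\{f_i\},\{T^i_n\},\{r_n\},\{\alpha^i_n\}$ verbatim from Theorem~\ref{main2}, so $\mathcal{X}$ remains uniformly smooth and uniformly convex, $J\Omega$ remains closed and convex, the $T^i_n$ remain an infinite family of generalized $J$-nonexpansive maps satisfying the NST-condition with $\Gamma$, and $\Gamma$ remains a family of closed generalized $J$-nonexpansive maps with $\cap_{n=1}^\infty F_J(T^i_n)=F_J(\Gamma)$. The only substantive hypotheses to check are the two nonemptiness conditions, and these are supplied explicitly in the corollary: $F_J(\Gamma)\neq\emptyset$ and $B:=F_J(\Gamma)\cap MEP(f,A,\varphi)\neq\emptyset$. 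Since $MEP(f,A,\varphi)=GMEP(f,0,\varphi)$ under $A=0$, the set $B$ here coincides with the set $B$ in Theorem~\ref{main2} evaluated at $A=0$, so the hypothesis $B\neq\emptyset$ transfers directly.

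With these identifications in hand, I would simply invoke Theorem~\ref{main2}: its iteration \eqref{algcoro2} with $A=0$ reduces term-by-term to the corollary's iteration (the $y_n$-step, the $\Omega_{n+1}$-step, and the retraction step are already identical, and the $u_n$-step loses only the $\langle y-u_n,Au_n\rangle$ summand). Therefore the conclusion of Theorem~\ref{main2}---strong convergence of $\{x_n\}$ to $R_B x$, where $R_B$ is the sunny generalized $J$-nonexpansive retraction of $\mathcal{X}$ onto $B$---holds verbatim for the corollary. The main obstacle, if any, is purely bookkeeping: one must confirm that the resolvent $T_r$ of Lemma~\ref{ww4} is genuinely well-defined and retains property (iii) when the $A$-term is deleted, rather than becoming degenerate. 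This is immediate because Lemma~\ref{ww3} already guarantees existence of the resolvent for a bifunction satisfying (A1)--(A4) with no monotone operator present, and the mixed term $\varphi(Jy)-\varphi(Ju_n)$ is handled exactly as in the $A\neq 0$ case. Hence no new estimate is required, and the proof is complete by reference to Theorem~\ref{main2}.
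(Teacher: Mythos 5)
Your proposal is correct and is exactly the paper's own approach: the paper offers no separate argument, presenting the corollary immediately after the line ``Let $A=0$ in Theorems \ref{main} and \ref{main2}'', so specializing $A=0$ and invoking Theorem \ref{main2} (after checking, as you do, that Lemma \ref{ww4} and the nonemptiness hypotheses survive the specialization) is precisely what is intended. Your bookkeeping is, if anything, more careful than the paper's, which silently tolerates a small mismatch between the corollary's $\Omega_{n+1}$-step (written in the form $\phi(z,u_n)\leq\phi(z,x_n)$ of Theorem \ref{main}) and the $y_n$-step taken from Theorem \ref{main2}.
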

\section{Applications}
\subsection{Applications in classical Banach spaces}
\vspace{0.2cm}
\noindent Theorems \ref{main} and \ref{main2} are applicable in classical Banach spaces, such as $L_{p},~l_{p}, or~ W^{m}_{p}(\Omega)$, $p \in (1,\infty)$, where $W^{m}_{p}(\Omega)$ denotes the usual Sobolev space. The analytical representations of duality maps are known in $L_p,$ $l_p,$ and
$W^p_m(\Omega),$ $p \in (1,\infty)$, $p^{-1}+q^{-1}=1$, see e.g., \cite{yak}. 

\subsection{Applications in Hilbert spaces}
\begin{corollary}\label{app1}
{\bl Let $\mathcal{X}=H$ be a real} Hilbert space and let $\Omega$ be a nonempty closed and convex subset of $H$. Let $\varphi:\Omega \rightarrow \mathbb{R}$ be a lower semi-continuous and convex function. For each $i \in \{1, 2, 3, ..., N\}$, {\bl let $f_{i}$ be a bifunction} from $\Omega\times \Omega$ to $\mathbb{R}$ satisfying $(A1)-(A4)$, 
$T^i_{n}:\Omega\rightarrow H, n=1, 2, 3, ...$ be an infinite family of nonexpansive maps and $\Gamma$ be a family of closed and nonexpansive maps from $\Omega$ to $H$ such that $\cap_{n=1}^{\infty}F(T_{n})=F(\Gamma) \neq \emptyset$
and $B := F(\Gamma)\cap GMEP(f,A, \varphi) \neq \emptyset.$ Assume that $\{T_{n}\}$ satisfies the NST-condition with $\Gamma$.
Let $\{x_{n}\}$ be generated by:  

\begin{equation}\label{algcoro3}
\begin{cases}  & x_{1} = x\in \Omega; \Omega_{1}=\Omega, \cr
                     & y_n =\alpha^0_{n}x_{n} + \sum_{i = 1} ^{N} \alpha^i_{n}T^i_{n}x_{n},\cr
                    & u_{n}\in \Omega, ~~such~~that~~ \sum_{i = 1} ^{N} f_i(u_n,y)+\varphi(y) - \varphi (u_n) 
                    \cr & + \langle y - u_n, A u_n \rangle + \frac{1}{r_n}\langle u_n-y_{n}, y-u_n\rangle\geq0,~~\forall~~y\in \Omega,\cr
                    & \Omega_{n+1} =\{z\in \Omega_{n} : ||u_{n} - z|| \leq ||x_{n} - z||\},\cr
                     & x_{n+1} = P_{\Omega_{n+1}}x,
\end{cases} 
\end{equation}

for all $n\in \mathbb{N}, \; \{\alpha^i_{n}\}\in [0,1]$ such that $\sum_{i = 0} ^{N} \alpha^i_{n} = 1$,  $\{r_n\}\subset [a,\infty)$
 for some $a>0$. Then, $\{x_{n}\}$ converges strongly 
to $P_B x$, where $P_B$ is the metric projection of $H$ onto $B$.   
\end{corollary}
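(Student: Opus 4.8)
The plan is to recognize the corollary as the special case $\mathcal{X}=H$ of Theorem \ref{main2}, obtained by setting the normalized duality map equal to the identity. First I would record the basic reductions that occur in a Hilbert space: since $H$ is identified with its dual via Riesz representation, the normalized duality map $J$ of Definition \ref{ndm} is the identity $I_H$, and hence $J^{-1}=J_*=I_H$ as well. Consequently the Lyapunov functional collapses to $\phi(x,y)=\|x\|^2-2\langle x,y\rangle+\|y\|^2=\|x-y\|^2$, as already noted after \eqref{Lya}, and the duality pairing becomes the Hilbert inner product.

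Next I would translate each hypothesis and each line of the iteration. Because $J=I_H$, one has $J\Omega=\Omega$, which is closed and convex whenever $\Omega$ is, so the structural requirement on $J\Omega$ in Theorem \ref{main2} is automatic. A map $T:\Omega\to H$ with $F(T)\neq\emptyset$ is generalized $J$-nonexpansive precisely when $\|Tx-p\|^2\le\|x-p\|^2$ for all $p\in F(T)$ (using $\phi=\|\cdot-\cdot\|^2$ and $F_J(T)=\{x:Tx=Jx=x\}=F(T)$); in particular every nonexpansive map with a fixed point qualifies, since $\|Tx-p\|=\|Tx-Tp\|\le\|x-p\|$. Under these identifications the update $y_n=\alpha^0_nx_n+\sum_{i=1}^N\alpha^i_nT^i_nx_n$ of \eqref{algcoro3} is exactly the update $y_n=\alpha^0_nx_n+\sum_{i=1}^N\alpha^i_n(J^{-1}\circ T^i_n)x_n$ from \eqref{algcoro2}; the equilibrium subproblem for $u_n$ matches that of \eqref{algcoro2} with $J$ suppressed; and the halfspace $\Omega_{n+1}=\{z:\|u_n-z\|\le\|x_n-z\|\}$ is the same as $\{z:\phi(u_n,z)\le\phi(x_n,z)\}$.

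It then remains to identify the two retractions. The key point is that in a Hilbert space the sunny generalized nonexpansive retraction $R_B$ coincides with the metric projection $P_B$: with $J=I_H$, the characterization in Lemma \ref{lem2}$(i)$ reads $z=R_Bx\iff\langle x-z,y-z\rangle\le0$ for all $y\in B$, which is exactly the variational characterization of the nearest-point projection onto the closed convex set $B$, and uniqueness of the sunny generalized nonexpansive retraction (Lemma \ref{ww1}) forces $R_B=P_B$. Having checked that $B=F(\Gamma)\cap GMEP(f,A,\varphi)\neq\emptyset$, that the $f_i$ satisfy (A1)--(A4), and that the NST-condition (which here reads $\lim\|x_n-T_nx_n\|=0\Rightarrow\lim\|x_n-Tx_n\|=0$) holds, all hypotheses of Theorem \ref{main2} are met, so $\{x_n\}$ converges strongly to $R_Bx=P_Bx$.

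Finally, I would flag the only point needing genuine care rather than bookkeeping: confirming that the abstract sunny generalized nonexpansive retraction of Lemma \ref{lem2} really reduces to the metric projection, and not merely to some nonlinear retraction. The cleanest argument is the variational-inequality one above, which identifies both maps through the same characterizing inequality and then invokes uniqueness from Lemma \ref{ww1}; no separate convexity hypothesis on $F(\Gamma)$ is required, because Theorem \ref{main2} (unlike Theorem \ref{main}) does not impose one. With that identification in hand the result is an immediate instance of Theorem \ref{main2}.
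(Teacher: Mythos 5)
Your proposal is correct and takes essentially the same route as the paper, whose entire proof is the observation that in a Hilbert space $J$ is the identity and $\phi(x,y)=\|x-y\|^2$, after which it invokes Theorem \ref{main} or Theorem \ref{main2}; you simply make explicit the bookkeeping (nonexpansive maps with fixed points are generalized $J$-nonexpansive, the half-spaces coincide, and $R_B=P_B$ via Lemma \ref{lem2}$(i)$ and Lemma \ref{ww1}) that the paper leaves implicit. Your side remark about avoiding the convexity hypothesis is harmless but unnecessary: in a Hilbert space the fixed-point set of a nonexpansive map is automatically closed and convex, so the corollary also follows from Theorem \ref{main} exactly as the paper asserts.
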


\begin{proof}
In a Hilbert space, $J$ is the identity operator and $\phi(x,y)=||x-y||^{2} ~ \text{for all} ~ x,y\in H$. The result follows from Theorem \ref{main} or Theorem \ref{main2}.
\end{proof}

\subsection{Applications to optimization problem}
Consider the following optimization problem:
\begin{equation}\label{appoptm}
    \min_{x \in \Omega}(\psi(x) + \varphi(x))
\end{equation}
where $\Omega$ is a nonempty closed convex subset of a Hilbert space $H$, and $\psi, \varphi :\Omega\rightarrow \mathbb{R}$ are two
convex and lower semi-continuous functionals.  Let $\Lambda \subset \Omega$ be the set of solutions to \eqref{appoptm}. Clearly, $\Lambda$ is a closed convex subset of $\Omega$. Let $f : \Omega \times \Omega \rightarrow \mathbb{R}$ be a bifunction
defined by $f(x, y) = \psi(y) - \psi(x)$. Consider the following mixed equilibrium problem: find $x^* \in \Omega$ such that
\begin{equation}\label{appmep}
    f(x^*,y)+\varphi(y) - \varphi (x^*) \geq 0,~~\forall~~y\in \Omega.
\end{equation}
Then, $f$ satisfies conditions (A1)–(A4) and $MEP = \Lambda$, where $MEP$ is the set of solutions to mixed equilibrium problem \eqref{appmep}. Let $\{x_n\}$ be the iterative sequence
generated by: 

\begin{equation}\label{algcoro3}
\begin{cases}  & x_{1} = x\in \Omega, \,\Omega_{1}=\Omega; \cr
                     & x_{n+1} = P_{\Omega_{n+1}}x,\, \Omega_{n+1} =\{z\in \Omega_{n} : ||u_{n} - z|| \leq ||x_{n} - z||\};\cr
                     & u_{n}\in \Omega, ~~such~~that~~ f(u_n,y)+\varphi(y) - \varphi (u_n) 
                     + \frac{1}{r_n}\langle u_n-y_{n}, y-u_n\rangle\geq0,~~\forall~~y\in \Omega,
\end{cases} 
\end{equation}
for all $n\in \mathbb{N}$,  $\{r_n\}\subset [a,\infty)$
 for some $a>0$, where $P_\Omega$ is the metric projection of $H$ onto $\Omega$. Then, $\{x_{n}\}$ converges strongly 
to $P_\Omega x$.

\end{document}